\DeclareMathAlphabet{\cat}{OT1}{cmss}{m}{sl}
\newtheorem{theorem}{Theorem}[section]
\newtheorem{proposition}[theorem]{Proposition}
\newtheorem{lemma}[theorem]{Lemma}
\newtheorem{corollary}[theorem]{Corollary}
\theoremstyle{definition}
\newtheorem{remark}[theorem]{Remark}
\newtheorem{example}[theorem]{Example}
\newcommand{\xra}{\xrightarrow}
\newcommand{\tens}{\otimes}
\newcommand{\gmu}{\boldsymbol{\mu}}
\renewcommand{\Im}{\operatorname{Im}}
\newcommand{\Ker}{\operatorname{Ker}}
\newcommand{\Coker}{\operatorname{Coker}}
\newcommand{\ind}{\operatorname{\hspace{0.3mm}ind}}
\newcommand{\ch}{\operatorname{char}}
\newcommand{\Br}{\operatorname{Br}}
\newcommand{\Spec}{\operatorname{Spec}}
\newcommand{\Gal}{\operatorname{Gal}}
\newcommand{\gPGL}{\operatorname{\mathbf{PGL}}}
\newcommand{\gSL}{\operatorname{\mathbf{SL}}}
\newcommand{\gSO}{\operatorname{\mathbf{SO}}}
\newcommand{\gGL}{\operatorname{\mathbf{GL}}}
\newcommand{\gm}{\operatorname{\mathbb{G}}_m}
\newcommand{\gme}{\operatorname{\mathbb{G}}_{m,E}}
\newcommand{\PGL}{\operatorname{PGL}}
\newcommand{\Int}{\operatorname{Int}}
\newcommand{\ed}{\operatorname{ed}}
\newcommand{\td}{\operatorname{tr.deg}}
\newcommand{\rank}{\operatorname{rank}}
\newcommand{\Z}{\mathbb{Z}}
\newcommand{\Q}{\mathbb{Q}}
\newcommand{\cT}{\mathcal T}
\newcommand{\cS}{\mathcal S}
\title[Essential dimension of simple algebras with involutions] 
{Essential dimension of simple algebras with involutions}
\author
[S. Baek] {Sanghoon Baek}
\address
{Department of Mathematics and Statistics, University of Ottawa, 585 King
Edward, Ottawa, ON K1N6N5, Canada}
\email {sbaek@uottawa.ca}
\thanks{The work has been supported by Beckenbach Dissertation Fellowship at the University of California at Los Angeles, Neher's NSERC Discovery grant 008836-2006, and Zainoulline's NSERC Discovery grant 385795-2010 and Accelerator Supplement grant 396100-2010.}
\begin{document}

\begin{abstract}
Let $1\leq m \leq n$ be integers with $m|n$ and $\cat{Alg}_{n,m}$ the class of central simple algebras of degree $n$ and exponent dividing $m$. In this paper, we find new, improved upper bounds for the essential dimension and $2$-dimension of $\cat{Alg}_{n,2}$. In particular, we show that $\ed_{2}(\cat{Alg}_{16,2})=24$ over a field $F$ of characteristic different from $2$.
\end{abstract}

\maketitle

\section{Introduction}

Let $\cT:\cat{Fields}/F\to\cat{Sets}$ be a functor (called an \emph{algebraic structure}) from the category $\cat{Fields}/F$ of field extensions over $F$ to the category $\cat{Sets}$ of sets. For instance, $\cT(E)$ with $E\in\cat{Fields}/F$ can be the sets of isomorphism classes of central simple $E$-algebras of degree $n$, \'etale $E$-algebras of rank $n$, quadratic forms over $E$ of dimension $n$, torsors (principal homogeneous spaces) over $E$ under a given algebraic group, etc. For fields $E,E'\in \cat{Fields}/F$, a field homomorphism $f:E\to E'$ over $F$ and $\alpha\in\cT(E)$, we write $\alpha_{E'}$ for the image of $\alpha$ under the morphism $\cT(f):\cT(E)\to \cT(E')$.

The notion of essential dimension was introduced by J. Buhler and Z. Reichstein
in \cite{BR97} and was generalized to algebraic structures by A. Merkurjev in \cite{BerhuyFavi03} and \cite{Merkurjev09}. The
essential dimension of an algebraic structure is defined to be the smallest number of
parameters needed to define the structure.

Let $E\in\cat{Fields}/F$ and $K\subset E$ a subfield over $F$. An
element $\alpha\in \cT(E)$ is said to be \emph{defined over $K$} and
$K$ is called a \emph{field of definition of $\alpha$} if there
exists an element $\beta\in \cT(K)$ such that $\beta_{E}=\alpha$. The \emph{essential dimension
of $\alpha$} is \[\ed(\alpha)=\min \{\td_{F}(K)\}\] over all fields of definition $K$ of $\alpha$. The
\emph{essential dimension of the functor $\cT$} is \[\ed(\cT)=\sup \{\ed(\alpha)\},\] where the supremum is taken over all fields $E\in\cat{Fields}/F$ and all
$\alpha\in \cT(E)$. Hence, the essential dimension of an algebraic structure $\cT$ measures the complexity of the structure in terms of the smallest number of parameters required to define the structure over a field extension of $F$.

Let $p$ be a prime integer. The \emph{essential $p$-dimension of  $\alpha$} is \[\ed_p(\alpha)=\min\{\ed(\alpha_{L})\},\] where $L$ ranges over all field extensions of $E$ of degree prime to $p$. In other words, $\ed_p(\alpha)=\min\{\td_{F}(K)\}$, where the minimum is taken over all field extensions $L/E$ of prime to $p$ and all subextensions $K/F$ of $L$ which are fields of definition of $\alpha_{L}$. Hence, $\ed(\alpha)\geq\ed_p(\alpha)$ and $\ed(\cT)\geq \ed_{p}(\cT)$ for all $p$. The \emph{essential $p$-dimension of $\cT$} is \[\ed_p(\cT)=\sup \{\ed_p(\alpha)\},\] where the supremum ranges over all fields $E\in\cat{Fields}/F$ and all $\alpha\in \cT(E)$.

Let $G$ be an algebraic group over $F$. The \emph{essential
dimension $\ed(G)$} (respectively, \emph{essential $p$-dimension $\ed_{p}(G)$}) of $G$ is defined to be $\ed(H^1(-,G))$ (respectively, $\ed_{p}(H^1(-,G))$), where $H^1(E,G)$ is the nonabelian cohomology set with respect to the finitely generated faithfully flat topology (equivalently, the set of isomorphism classes of $G$-torsors) over a field extension $E$ of $F$.

For every integer $n\geq 1$, a divisor $m$ of $n$ and any field extension $E/F$, let $\cat{Alg}_{n,m}(E)$ denote the set of isomorphism classes of central simple $E$-algebras of degree $n$
and exponent dividing $m$. Then, there is a natural bijection between $H^1(E,\gGL_{n}/\gmu_{m})$ and $\cat{Alg}_{n,m}(E)$ (see \cite[Example 1.1]{BM09}), thus \[\ed(\cat{Alg}_{n,m})=\ed(\gGL_{n}/\gmu_{m}) \text{ and } \ed_{p}(\cat{Alg}_{n,m})=\ed_{p}(\gGL_{n}/\gmu_{m}).\]

In this paper, we compute upper bounds for the essential dimension and $2$-dimension of $\cat{Alg}_{n,2}$. By a theorem of Albert, a central simple algebra has exponent dividing $2$ if and only if it admits an involution of the first kind (see \cite[Theorem 3.1]{Book}). Thus, any algebra $A$ in $\cat{Alg}_{n,2}(K)$ for any field extension $K/F$ has involutions of the first kind. Moreover, such $A$ has involutions of both symplectic and orthogonal types (see \cite[Corollary 2.8(2)]{Book}). By the primary decomposition theorem and \cite[Section 6]{BM10}, we have
\begin{equation}\label{primariydecompo}
\ed(\cat{Alg}_{n,2})=\ed(\cat{Alg}_{2^{r},2}) \text{ and } \ed_{2}(\cat{Alg}_{n,2})=\ed_{2}(\cat{Alg}_{2^{r},2}),
\end{equation}
where $2^{r}$ is the largest power of $2$ dividing $n$. Hence, we may assume that $n$ is a power of $2$.

By \cite[Remark 8.2 and Corollary 8.3]{BM10},
\[\ed_{2}(\cat{Alg}_{4,2})=\ed(\cat{Alg}_{4,2})=4 \text{ and } \ed_{2}(\cat{Alg}_{8,2})=\ed(\cat{Alg}_{8,2})=8\] over a field $F$ of $\ch(F)\neq 2$. By \cite[Theorem 1.1 and Theorem 1.2]{Baek}, \[\ed_{2}(\cat{Alg}_{4,2})=\ed(\cat{Alg}_{4,2})=3 \text{ and } 3\leq \ed_{2}(\cat{Alg}_{8,2})\leq \ed(\cat{Alg}_{8,2})\leq 10\] over a field $F$ of $\ch(F)=2$. In general, in  \cite[Theorem]{BM10}, the following bounds were established over a field $F$ of $\ch(F)\neq 2$:
\begin{equation}\label{genelbd}
(\log_{2}(n)-1)n/2\leq \ed_{2}(\cat{Alg}_{n,2}) \leq n^{2}/4+n/2 \text{ for all } n=2^r\geq 4.
\end{equation}
On the other hand, no upper bound for $\ed(\cat{Alg}_{n,2})$ was known for $n\geq 16$.

In the present paper, we find upper bounds for the essential dimension and $2$-dimension of $\cat{Alg}_{n,2}$ over an arbitrary field as follows:

\begin{theorem}\label{mainthm}
Let $F$ be an arbitrary field. Then, for any integers $n=2^r\geq 8$,
\begin{enumerate}
\item[(i)] $\ed(\gGL_{n}/\gmu_{2})=\ed(\cat{Alg}_{n,2})\leq (n-1)(n-2)/2$ \text{ if } $\ch{F}\neq 2$,
\item[(ii)] $\ed_{2}(\gGL_{n}/\gmu_{2})=\ed_{2}(\cat{Alg}_{n,2})\leq \begin{cases}
n^{2}/4 & \text{if $\ch{F}=2$},\\
n^{2}/16+n/2 & \text{if $\ch{F}\neq 2$}.
\end{cases}$
\end{enumerate}
\end{theorem}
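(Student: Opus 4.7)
I would approach the theorem by bounding $\ed(\cat{Alg}_{n,2})$ via a richer algebraic structure $\cT'$ equipped with a natural transformation $\cT'\to\cat{Alg}_{n,2}$ which is surjective on every set of field-valued points; any such pointwise surjection yields $\ed(\cat{Alg}_{n,2})\le \ed(\cT')$ (and likewise for $\ed_{2}$), reducing matters to estimating $\ed(\cT')$ for a well-chosen $\cT'$.

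The natural candidate for $\cT'$, suggested by the paper's title, is the functor of pairs $(A,\sigma)$ with $A\in\cat{Alg}_{n,2}$ and $\sigma$ an involution of the first kind: by Albert's theorem \cite[Theorem 3.1]{Book} every such $A$ admits an involution of the first kind, and by \cite[Corollary 2.8(2)]{Book} one of each of the orthogonal and symplectic types, so the forgetful map $(A,\sigma)\mapsto A$ is pointwise surjective. The functor of pairs is classified by $H^1(-,\gPGSp_n)$ or $H^1(-,\gPGO_n)$ according to the type of $\sigma$, and for sharper bounds one can further enrich the datum (for example with a trivialization of the discriminant or of the Clifford invariant) so as to pass to a $\gSpin$-type classifying group $G$ whenever this is advantageous.

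The standard route to bound $\ed(G)$ is then a generically free linear representation: if $G$ acts linearly on $V$ with trivial generic geometric stabilizer, then $\ed(G)\le \dim V-\dim G$. The technical heart of the proof is therefore the construction, for each of the three bounds, of a specific $G$-representation whose codimension $\dim V-\dim G$ matches the stated right-hand side. In part~(i) this is to produce $(n-1)(n-2)/2$ uniformly in characteristic $\neq 2$; in part~(ii) one is additionally allowed first to replace $F$ by an odd-degree extension, splitting off a part of the index of $A$, and the leading term $n^{2}/16=(n/4)^{2}$ in characteristic $\neq 2$ strongly hints that the generic algebra decomposes so as to expose a factor of degree $n/4$ whose structure parameters then enter the representation linearly. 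In characteristic $2$ no such decomposition is available (discriminants and Clifford constructions degenerate, and the two types of involution coincide), which explains the weaker bound $n^{2}/4$.

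The main obstacle will be producing these representations concretely and verifying trivial generic stabilizer: the dimension counts have to be exact in order to reach the targets $(n-1)(n-2)/2$ and $n^{2}/16+n/2$, and one must analyse generic stabilizers for $\gPGSp_n$, $\gPGO_n$ or their Spin-type covers acting on tensor constructions, choosing $V$ so as to simultaneously minimize $\dim V$ and keep the action generically free. The sharpness of the statement at $n=16$, where $\ed_{2}(\cat{Alg}_{16,2})=24$ matches the Karpenko--Merkurjev-style lower bound already visible in the left inequality of \eqref{genelbd}, leaves no slack whatsoever in the dimension accounting, making the delicate numerical choice of $V$ the decisive step of the argument.
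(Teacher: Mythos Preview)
Your plan has a genuine gap: you have correctly identified the general mechanism (pointwise-surjective functor plus generically free representation), but you have misidentified both the carrier group and the role of involutions.

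The paper does not work with $\mathbf{PGSp}_n$, $\mathbf{PGO}_n$, or $\gSpin$ at all. For all three bounds it passes instead to the normalizer of a maximal torus in $\gGL_n/\gmu_2$ (or, for~(i), in $\gSL_n/\gmu_2$), or to a subgroup thereof. These are groups of the form $T\rtimes H$ with $T$ a split torus and $H$ a finite permutation group, and the generically free representations are built combinatorially from $H$-equivariant lattice surjections $\nu:\Z[X]\twoheadrightarrow T^{*}$ via Lemma~\ref{semidirectgeneric}. For part~(i) one takes $T'_{n,2}\rtimes S_n$ with an explicit $S_n$-set $X'$ of size $\binom{n}{2}$ (Lemma~\ref{simpletnm}(ii)), combined with the fibration inequality of Lemma~\ref{lemmatrivial}; for the bound $n^2/4$ one replaces $S_{2^r}$ by its Sylow $2$-subgroup $P_{2^r}$ and uses a smaller $P_{2^r}$-invariant set (Lemma~\ref{torsioncsa}). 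The numbers $(n-1)(n-2)/2$ and $n^2/4$ are then simply $|X|-\rank(T^*)$, not $\dim V-\dim G$ for a semisimple $G$.

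Involutions enter only in the proof of the sharpest bound $n^2/16+n/2$, and their role is structural rather than classificatory: Lemmas~\ref{involdeg2} and~\ref{extinvol} are used to prove (Proposition~\ref{mainproposition}, Corollary~\ref{etalesubalgebra}) that over a $2$-closed field every division algebra of degree $2^r$ and exponent $2$ contains a maximal \'etale subalgebra of the shape $K_1\otimes K_2\otimes K$ with $\dim_F K_i=2$ and $\dim_F K=2^{r-2}$. This is precisely what justifies replacing $S_{2^r}$ by the much smaller group $G_r=S_2\times S_2\times S_{2^{r-2}}$ in the semidirect product $T_r\rtimes G_r$ (Lemma~\ref{surjectlemm}); the term $2^{2r-4}=n^2/16$ then arises as the rank of one summand of an explicit permutation lattice $\Lambda_r$ surjecting onto $J_r$, not from any tensor decomposition of $A$ itself. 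Your $\mathbf{PGSp}/\mathbf{PGO}/\gSpin$ route offers no candidate representation meeting these exact dimension counts, and without one the plan is not a proof.
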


We remark that our proof of the first inequality of (ii) in Theorem \ref{mainthm} holds for an arbitrary field.

\begin{corollary}\label{edslhighest16}
Let $F$ be a field of characteristic different from $2$. Then
\[\ed_{2}(\gGL_{16}/\gmu_{2})=\ed_{2}(\cat{Alg}_{16,2})=24.\]
\end{corollary}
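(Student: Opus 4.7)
The plan is to combine the upper bound established in Theorem \ref{mainthm}(ii) with the known lower bound \eqref{genelbd} from \cite[Theorem]{BM10}, and observe that they coincide at $n=16$. Concretely, plugging $n=16$ into the characteristic $\neq 2$ upper bound of Theorem \ref{mainthm}(ii) gives
\[
\ed_{2}(\cat{Alg}_{16,2}) \leq \frac{16^{2}}{16} + \frac{16}{2} = 16 + 8 = 24,
\]
while plugging $n=16$ into the lower bound \eqref{genelbd} gives
\[
\ed_{2}(\cat{Alg}_{16,2}) \geq \frac{(\log_{2} 16 - 1)\cdot 16}{2} = \frac{3 \cdot 16}{2} = 24.
\]
Combining these two inequalities yields the desired equality $\ed_{2}(\cat{Alg}_{16,2}) = 24$, and the equality $\ed_{2}(\gGL_{16}/\gmu_{2}) = \ed_{2}(\cat{Alg}_{16,2})$ is the identification already noted in the introduction.

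Since the upper bound from Theorem \ref{mainthm}(ii) is proved in the main body of the paper and the lower bound is quoted from \cite{BM10}, there is essentially no additional work: the entire corollary is a numerical coincidence at $n=16$ between the two bounds. For any other $n=2^{r}\geq 32$, the gap between $(\log_{2}(n)-1)n/2$ and $n^{2}/16 + n/2$ is positive, so $n=16$ is the smallest value at which the new upper bound is sharp enough to meet the Baek--Merkurjev lower bound. There is no hard step, but one should remark that the corollary is really the justification for the main theorem: it demonstrates that the improvement from the previous bound $n^{2}/4 + n/2$ to $n^{2}/16 + n/2$ is strong enough to pin down $\ed_{2}(\cat{Alg}_{n,2})$ exactly in a new case beyond the previously known $n\leq 8$.
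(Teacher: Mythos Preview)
Your proof is correct and follows exactly the same approach as the paper: combine the lower bound from \eqref{genelbd} with the upper bound from Theorem~\ref{mainthm}(ii), both evaluated at $n=16$. The additional commentary you include about the numerical coincidence is accurate but not part of the paper's proof, which is just the two-line argument you give first.
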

\begin{proof}
The lower bound $24\leq \ed_{2}(\cat{Alg}_{16,2})$ follows from (\ref{genelbd}). On the other hand, the upper bound $\ed_{2}(\cat{Alg}_{16,2})\leq 24$ follows from Theorem \ref{mainthm}(ii).
\end{proof}

The paper is organized as follows. In Section \ref{generalst}, we introduce a general strategy to find upper bound for the essential dimension. In Section \ref{pfoffirstsedhalf}, we construct generically free representations for normalizers of maximal tori in $\gGL_{n}/\gmu_{2}$ and $\gSL_{n}/\gmu_{2}$, and a subgroup of the normalizer in $\gGL_{n}/\gmu_{2}$ to prove Theorem \ref{mainthm}(i) and the first part of (ii). The proof of the second part of (ii) in the main theorem is divided into two parts. In the first part, we study the structure of division algebras of exponent $2$ using involutions and then construct a certain surjective morphism $\Theta$. In the second part, we construct a generically free representation for a subgroup of $T_{r}\rtimes G_{r}$ to obtain the upper bound in the main theorem. In the last section, we relate the essential dimensions of $\cat{Alg}_{n,m}$ and of the split simple groups of type $A_{n-1}$.


To prove Theorem \ref{mainthm}(i) we use Lemmas \ref{lemmatrivial} and \ref{simpletnm}(ii). In fact, both Lemma \ref{simpletnm}(i) and (ii) give upper bounds for the essential dimension of $\gGL_{n}/\gmu_{2}$, but the latter provide a better bound combined with Lemma \ref{lemmatrivial}. Before we compute both (i) and (ii) in Lemma \ref{simpletnm}, it is not clear which one gives better bound. In particular, Lemma \ref{simpletnm}(ii) was obtained by Lemire in \cite{Lemire} under the assumption that the base field $F$ is an algebraically closed of characteristic $0$. Here we provide a different proof without the assumption on the base field $F$.

The general strategy (see Section \ref{generalst}) was first used by A.~Meyer and Z.~Reichstein in \cite{MR09} and \cite{MR10} to obtain upper bound for the essential $p$-dimension of $\gSL_{p^r}/\gmu_{p^r}$. In a subsequent paper \cite{Ruozzi} using similar general method, A.~Ruozzi obtained an improved upper bound. Based on his result, the upper bound in (\ref{genelbd}) was obtained. Here, the general strategy is further refined by methods of algebraic tori (Lemmas \ref{lemmabrau} and \ref{surjectlemm}), combined with the structure of simple algebras of exponent $2$ (Corollary \ref{etalesubalgebra}), and the results provide sharper bound (Theorem \ref{mainthm}(ii)).

\emph{Acknowledgements}: I am grateful to my advisor A.~Merkurjev for many useful discussions and support and to Z.~Reichstein for helpful comments. I also wish to thank E.~Neher and K.~Zainoulline for their reading of the manuscript. Section $3$ of this paper is based on the author's doctoral thesis at the University of California at Los Angeles.

\section{Preliminaries}\label{generalst}

A morphism of functors $\cS\to \cT$ from $\cat{Fields}/F$ to $\cat{Sets}$ is called \emph{surjective} if for any $E\in \cat{Fields}/F$, $\cS(E)\to \cT(E)$ is surjective. Such a surjective morphism gives an upper bound for the essential dimension of $\cT$,
\begin{equation}\label{ctcssss}
\ed(\cT)\leq \ed(\cS);
\end{equation}
see \cite[Lemma 1.9]{BerhuyFavi03}. Similarly, a morphism $\cS\to \cT$ from $\cat{Fields}/F$ to $\cat{Sets}$ is called \emph{$p$-surjective} if for any $E\in \cat{Fields}/F$ and any $\alpha\in \cT(E)$, there is a finite field extension $L/E$ of degree prime to $p$ such that $\alpha_{L}\in\Im(\cS(L)\to \cT(L))$. Then \begin{equation}\label{ctcsssssecond}
\ed_{p}(\cT)\leq \ed_{p}(\cS);
\end{equation}
see \cite[Proposition 1.3]{Merkurjev09}. Obviously, any surjective morphism is $p$-surjective for any prime $p$.

A field extension $K/F$ is called \emph{$p$-closed} (or \emph{$p$-special}) if every finite extension of $K$ is separable of degree a power of $p$. For a limit-preserving functor $\cT$ (e.g. the functor $H^{1}(-,G)$ for an algebraic group $G$ over $F$), by \cite[Lemma 3.3]{LMMR}, we have
\begin{equation}\label{limitpv}
\ed_{p}(\cT)=\ed_{p}(\cT_{K}),
\end{equation}
where $K/F$ is a $p$-closed field and $\cT_{K}$ is the restriction of $\cT$ to $\cat{Fields}/K$.

By (\ref{limitpv}), a surjective morphism $\cS\to \cT$ of limit-preserving functors over a $p$-closed field gives an upper bound for the essential $p$-dimension of $\cT$,
\[\ed_{p}(\cT)\leq \ed_{p}(\cS).\]In particular, if $\cT=H^{1}(-,G)$ for an algebraic group $G$ over $F$, then any \emph{generically free} representation, i.e., a finite dimensional vector space $V$ with $G$-action such that there exist a nonempty $G$-invariant open subset of $V$ on which $G$ acts freely, gives an upper bound for $\cT$,
\begin{equation}\label{verygeneral}
\ed_{p}(G)\leq\ed(G)\leq \dim(V)-\dim(G);
\end{equation}
see \cite[Theorem 3.4]{Re00} and \cite[Corollary 4.2]{Merkurjev09}. Moreover, if $G'$ is a closed subgroup of $G$ such that $([G:G'],p)=1$, then by \cite[Lemma 4.1]{MR09},
\begin{equation}\label{subgrouped}
\ed_{p}(G)=\ed_{p}(G').
\end{equation}

Let $T$ be a split torus over $F$ and $X$ be a finite set. Suppose that a finite group $H$ acts on the character group $T^{*}$ and $X$, and there is a $H$-equivariant homomorphism $\nu:\Z[X]\to T^{*}$. Then one can construct a generically free representation $V_{X}$ for $T\rtimes H$ as follows:
let \[V_{X}=\coprod_{x\in X}F\cdot x\] be a vector space over $F$ generated by $X$. The action of $H$ on $X$ induces a natural $H$-action on $V_{X}$. The split torus $T$ acts on $V_{X}$ by $t\cdot x=\nu(x)(t)\cdot x$ for $t\in T$. Therefore, the semidirect product $T\rtimes H$ acts linearly on $V_{X}$.

\begin{lemma}[{\cite[Lemma 3.3]{MR09}}]\label{semidirectgeneric}
The vector space $V_{X}$ gives a generically free representation for $T\rtimes H$ if and only if
\begin{enumerate}
\item[(i)] $\nu$ is surjective and
\item[(ii)] $H$ acts faithfully on $\Ker(\nu)$.
\end{enumerate}
Moreover, if these conditions are satisfied, then $\ed(T\rtimes H)\leq |X|-\rank(T^{*})$.
\end{lemma}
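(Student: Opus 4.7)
The plan is to dualize $\nu$ so that $T$ sits inside the big torus $(\gm)^X$ and then directly compute the stabilizer of a generic point. Dualizing the $H$-equivariant map $\nu:\Z[X]\to T^*$ gives an $H$-equivariant morphism of split tori $\nu^{\vee}:T\to (\gm)^X$, and the $(T\rtimes H)$-action on $V_X$ is the pullback of the standard $((\gm)^X\rtimes H)$-action via $\nu^{\vee}$. Condition (i) is equivalent to $\nu^{\vee}$ being a closed immersion: if $\nu$ fails to be surjective, the Cartier dual of $T^*/\Im(\nu)$ is a nontrivial subgroup scheme of $T$ sitting inside $\Ker(\nu^{\vee})$, hence acting trivially on $V_X$ and belonging to every stabilizer; this obstructs generic freeness. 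Thus (i) is necessary.

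Assume now (i), so that $T$ sits in $(\gm)^X$ as the subtorus cut out by the multiplicative relations indexed by $\Ker(\nu)$. Let $v=\sum_x a_x\cdot x\in V_X$ with all $a_x\neq 0$. A short computation shows that $(t,h)\in T\rtimes H$ fixes $v$ exactly when $t_x=a_x/a_{h^{-1}x}$ for every $x\in X$ and the resulting tuple actually lies in $T\subseteq(\gm)^X$. Unpacking the defining relations, this membership condition becomes $\prod_x (a_x/a_{h^{-1}x})^{n_x}=1$ for every $\sum n_x[x]\in \Ker(\nu)$, which after the change of variable $y=h^{-1}x$ simplifies to $\prod_x a_x^{n_x-n_{hx}}=1$. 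For each $h\in H$ that acts non-trivially on $\Ker(\nu)$, pick a witness $\chi_h=\sum n_x[x]\in \Ker(\nu)$ with $h\cdot\chi_h\neq \chi_h$; the corresponding relations cut out finitely many proper subvarieties of $V_X$, and on the dense open complement the stabilizer of $v$ is exactly $\{(t_h,h):h\in H\text{ acts trivially on }\Ker(\nu)\}$, where each $t_h$ is uniquely determined by $v$.

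This generic stabilizer is trivial in $T\rtimes H$ precisely when $H$ acts faithfully on $\Ker(\nu)$, which is (ii). When both (i) and (ii) hold, $V_X$ is a generically free representation of $T\rtimes H$, so (\ref{verygeneral}), combined with $\dim V_X=|X|$ and $\dim(T\rtimes H)=\dim T=\rank(T^*)$ (as $H$ is finite), yields $\ed(T\rtimes H)\leq |X|-\rank(T^*)$.

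The main substance is the stabilizer computation in the second paragraph: the careful tracking of how the defining relations of $T\subseteq(\gm)^X$ interact with the $H$-permutation of coordinates, and the verification that a sufficiently generic choice of $a$ sharply separates the $h$-fixed classes from the rest of $\Ker(\nu)$, so that the generic stabilizer picks out exactly $\{h\in H : h|_{\Ker(\nu)}=\id\}$. The necessity of (i) and the final dimension bound are essentially formal once generic freeness is settled.
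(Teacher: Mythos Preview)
The paper does not actually prove this lemma: it is quoted verbatim from \cite[Lemma 3.3]{MR09}, and the only justification given in the paper is the one-line remark that the final inequality follows from (\ref{verygeneral}). So there is no ``paper's own proof'' to compare against.

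Your argument is a correct self-contained proof. Dualizing $\nu$ to realize $T$ inside $(\gm)^X$, reading off that the kernel of the $T$-action on $V_X$ is the diagonalizable group with character group $T^*/\Im(\nu)$ (whence the necessity of (i)), and then computing the stabilizer of a point $v=\sum a_x x$ with all $a_x\neq 0$ by translating the membership condition $(a_x/a_{h^{-1}x})_x\in T$ into the relations $\prod_x a_x^{n_x-n_{hx}}=1$ for $(n_x)\in\Ker(\nu)$ --- this is exactly the right computation, and your genericity argument (one witness $\chi_h$ per nontrivially-acting $h$, removing finitely many hypersurfaces) cleanly identifies the generic stabilizer with the kernel of the $H$-action on $\Ker(\nu)$. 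One small remark: once (i) holds, $\nu^\vee$ is a closed immersion, so for each fixed $h$ the equation $\nu^\vee(t)=(a_x/a_{h^{-1}x})_x$ has at most one solution scheme-theoretically; hence the generic stabilizer you compute is reduced and your $\bar F$-point calculation really does show scheme-theoretic triviality. The last line, invoking (\ref{verygeneral}) with $\dim V_X=|X|$ and $\dim(T\rtimes H)=\rank(T^*)$, matches the paper's own remark.
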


The last statement in Lemma \ref{semidirectgeneric} follows from (\ref{verygeneral}).

\section{Proofs of {\rm(i)} and the first part of {\rm(ii)} in Theorem {\rm\ref{mainthm}}}\label{pfoffirstsedhalf}
Let $G$ be a smooth reductive algebraic group over $F$. Let $T$ be a maximal torus of $G$ and $N$ the normalizer of $T$ in $G$. Then the canonical map
\[H^{1}(K,N)\to H^{1}(K,G)\]
is surjective for any field extension $K/F$ by \cite[Corollary 5.3]{CGR}, i.e., the morphism $H^{1}(-,N)\to H^{1}(-,G)$ is surjective and $p$-surjective for any prime $p$. Therefore, by (\ref{ctcssss}) and (\ref{ctcsssssecond}), we have
\begin{equation}\label{reductivenormalizer}
\ed(G)\leq \ed(N) \text{ and } \ed_{p}(G)\leq \ed_{p}(N).
\end{equation}

For any integer $n$ with $2|n$, consider the reductive group $\gGL_n/\gmu_{2}$ and the maximal torus $T_{n,2}:=\gm^{n}/\gmu_{2}$ in the group. Similarly, we let $T'_{n,2}$ be the maximal torus in $\gSL_{n}/\gmu_{2}$, i.e., $T'_{n,2}=T_{n,2}\cap (\gSL_{n}/\gmu_{2})$. In the following lemma we compute upper bounds for the essential dimension of normalizers of maximal tori in $\gGL_n/\gmu_{2}$ and $\gSL_{n}/\gmu_{2}$. We note that a different proof of Lemma \ref{simpletnm}(ii) is given in \cite[Proposition 5.6]{Lemire} under the assumption that the base field $F$ is an algebraically closed of characteristic $0$.

\begin{lemma}\label{simpletnm}
Let $F$ be an arbitrary field and $S_{n}$ the symmetric group on $n$ elements. Then we have
\begin{enumerate}
\item[(i)] $\ed(T_{n,2}\rtimes S_{n})\leq (n^{2}-n)/2$ for $n\geq 3$,
\item[(ii)] $\ed(T'_{n,2}\rtimes S_{n})\leq (n^{2}-3n+2)/2$ for $n\geq 6$.
\end{enumerate}
\end{lemma}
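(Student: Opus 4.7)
The plan is to apply Lemma~\ref{semidirectgeneric} with carefully chosen permutation sets. The relevant character lattices are $T_{n,2}^{*}=\{a\in\Z^{n}:\sum a_{i}\equiv 0\pmod 2\}$ (rank $n$) and $(T'_{n,2})^{*}=T_{n,2}^{*}/\Z\cdot(1,\ldots,1)$ (rank $n-1$), each with $S_{n}$ acting by coordinate permutation.

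For part~(i) I would take $X=\{\{i,j\}:1\le i\le j\le n\}$ of cardinality $\binom{n+1}{2}$, with the $S_{n}$-equivariant map $\nu:\Z[X]\to T_{n,2}^{*}$ defined by $\{i,j\}\mapsto e_{i}+e_{j}$ (so $\{i,i\}\mapsto 2e_{i}$). For part~(ii) I would take $X=\{\{i,j\}:i<j\}$ of cardinality $\binom{n}{2}$ and compose the analogous map with the quotient onto $(T'_{n,2})^{*}$. Surjectivity in both cases follows from the identities $e_{i}-e_{j}=(e_{i}+e_{k})-(e_{j}+e_{k})$ and $2e_{i}=(e_{i}+e_{j})+(e_{i}+e_{k})-(e_{j}+e_{k})$, which produce a generating set for the even-sum lattice. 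Granting faithfulness of the $S_{n}$-action on $\ker\nu$, Lemma~\ref{semidirectgeneric} then delivers the bounds $\binom{n+1}{2}-n=(n^{2}-n)/2$ and $\binom{n}{2}-(n-1)=(n^{2}-3n+2)/2$, matching the statement.

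The substantive step is verifying this faithfulness, which I would handle by a rational representation-theoretic calculation. Writing $\Q^{n}=\mathbf{1}\oplus\mathbf{V}$ with $\mathbf{V}$ the standard representation, one has $\Q[X]\cong\mathrm{Sym}^{2}(\Q^{n})\cong\mathbf{1}\oplus\mathbf{V}\oplus\mathrm{Sym}^{2}(\mathbf{V})$ in case~(i); in case~(ii), after splitting off the diagonal copy of $\Q^{n}$ spanned by the $\{i,i\}$, one obtains $\Q[X]\cong\mathrm{Sym}^{2}(\mathbf{V})$. Applying Schur's lemma to the surjection $\nu\otimes\Q$: in case~(i) the kernel contains a copy of $\mathbf{V}$, which is a faithful $S_{n}$-representation for $n\ge 3$, giving faithfulness immediately. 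In case~(ii), decomposing $\mathrm{Sym}^{2}(\mathbf{V})=\mathbf{1}\oplus\mathbf{V}\oplus\mathbf{W}$ for $n\ge 4$ (with $\mathbf{W}$ the irreducible indexed by the partition $(n-2,2)$), one finds that the kernel of $\nu\otimes\Q$ is $\mathbf{1}\oplus\mathbf{W}$, so faithfulness reduces to faithfulness of $\mathbf{W}$.

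The main obstacle is precisely this last verification. For $n=4$ the representation $\mathbf{W}=(2,2)$ is two-dimensional and factors through $S_{4}/V_{4}\cong S_{3}$, so is not faithful; this accounts for the hypothesis $n\ge 6$. For $n\ge 5$ the kernel of $\mathbf{W}$ would be a proper normal subgroup of $S_{n}$, hence either trivial or $A_{n}$, and the latter is excluded since any representation factoring through $S_{n}/A_{n}$ is one-dimensional while $\dim\mathbf{W}=n(n-3)/2\ge 5$. Rational faithfulness forces integral faithfulness since $\ker(\nu)\otimes\Q$ is generated by $\ker(\nu)$, completing the verification; Lemma~\ref{semidirectgeneric} then yields the two claimed bounds.
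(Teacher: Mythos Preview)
Your argument is correct and uses the same overall framework as the paper: the $S_{n}$-sets $X$ you choose coincide with the paper's (up to the harmless sign on the diagonal generators in~(i)), the map $\nu$ is the same, and both proofs conclude via Lemma~\ref{semidirectgeneric}. The difference is in how faithfulness of the $S_{n}$-action on $\Ker(\nu)$ is verified. The paper argues by hand: for each nontrivial $\sigma\in S_{n}$ it writes down an explicit element of $\Ker(\nu)$ not fixed by $\sigma$, splitting in part~(ii) according to whether $\sigma$ has a cycle of length $\geq 3$ or is a product of disjoint transpositions. Your route is representation-theoretic: you identify $(\Ker\nu)\otimes\Q$ with $\mathrm{Sym}^{2}(\mathbf{V})$ in case~(i) and with $\mathbf{1}\oplus\mathbf{W}$ (where $\mathbf{W}$ is the $(n-2,2)$-irreducible) in case~(ii), reducing faithfulness to that of $\mathbf{V}$, respectively $\mathbf{W}$. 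This is more conceptual and in fact slightly stronger: your normal-subgroup argument for $\mathbf{W}$ shows faithfulness already for $n\geq 5$, whereas the paper's explicit construction in the product-of-transpositions case uses three disjoint transpositions and so is written for $n\geq 6$. Either approach suffices for the lemma as stated.
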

\begin{proof}
(i) Note that the character group $(T_{n,2})^{*}$ is isomorphic to
\[\{(t_{1},\cdots,t_{n})\in \Z^{n}|\text{ }t_{1}+\cdots+t_{n}=0 \text{ in } \Z/2\Z\}.\]

Let $e_{i,j}=(0,\cdots, 1,\cdots, -1, 0)$ be an element of $(T_{n,2})^{*}$, where $1$ and $-1$ are placed in the $i$th and $j$th positions respectively for $1\leq i\neq j\leq n$ and $0$'s are placed in other positions. Similarly, let $f_{i,j}=(0,\cdots, 1,\cdots, 1, 0)$, where $1$'s are placed in the $i$th and $j$th positions for $1\leq i\neq j\leq n$ and $0$'s are placed in other positions and let $g_{k}=(0,\cdots, -2,\cdots, 0)$ as an element of $(T_{n,2})^{*}$, where $-2$ is placed in the $k$th position for $1\leq k\leq n$ and $0$'s are placed in other positions.

Let $X$ be a set consisting of $f_{i,j}$ and $g_{k}$ for all $1\leq i\neq j \leq n$ and all $1\leq k \leq n$. Then $X$ is a $S_n$-invariant subset of $(T_{n,2})^{*}$ and $|X|=|f_{i,j}|+|g_{k}|=(n^2-n)/2+n$.

It is clear that $e_{i,j}$ and $f_{i,j}$ generate $(T_{n,2})^{*}$ as an abelian group, as the indices $i$ and $j$ run over $1$ to $n$. Since $f_{i,j}+g_{j}=e_{i,j}$, $X$ generates $(T_{n,2})^{*}$ as an abelian group and hence we have a surjective $S_{n}$-equivariant homomorphism $\nu:\Z[X]\to (T_{n,2})^{*}$ taking $f_{i,j}$ and $g_{k}$ to themselves.

We show that $S_{n}$ acts faithfully on $\Ker(\nu)$. Let $\sigma$ be a nontrivial element of $S_{n}$. Then there exists $1\leq i_{0} \leq n$ such that $\sigma(i_{0})\neq i_{0}$. Choose a $1\leq j_{0}\leq n$ which is different from $\sigma(i_{0})$ and $i_{0}$. Then $\sigma$ does not fix $2f_{i_{0},j_{0}}+g_{i_{0}}+g_{j_{0}}\in \Ker(\nu)$. Hence, by Lemma \ref{semidirectgeneric}, there exists a generically free representation for $T_{n,2}\rtimes S_{n}$, therefore, \[\ed(T_{n,2}\rtimes S_{n})\leq (n^2-n)/2+n-\rank((T_{n,2})^{*})=(n^2-n)/2.\]

\noindent
(ii) Similarly, the character group $(T'_{n,2})^{*}$ is isomorphic to 
\[\{(t_{1},\cdots,t_{n})\in \Z^{n}/\Z|\text{ }t_{1}+\cdots+t_{n}=0 \text{ in } \Z/2\Z\}.\]

Let $\overline{e}_{i,j}$ and $\overline{f}_{i,j}$ denote the classes of $e_{i,j}$ and $f_{i,j}$ in $\Z^{n}/\Z$. Let $X'$ be a set consisting of $\overline{f}_{i,j}$ for all $1\leq i\neq j \leq n$. Then $X'$ is a $S_n$-invariant subset of $(T'_{n,2})^{*}$ and $|X'|=|\overline{f}_{i,j}|=(n^2-n)/2$.

Note that for any $i\neq j$ there exists $k\neq i,j$ such that $\overline{e}_{i,j}=\overline{f}_{i,k}+\sum \overline{f}_{p,q}$, where the sum ranges over all different $p$ and $q$ with $p, q\neq j, k$. Therefore, $X'$ generates $(T'_{n,2})^{*}$ as an abelian group as $\overline{e}_{i,j}$ and $\overline{f}_{i,j}$ generate $(T'_{n,2})^{*}$. Hence we have a surjective $S_{n}$-equivariant homomorphism $\nu':\Z[X']\to (T'_{n,2})^{*}$ taking $\overline{f}_{i,j}$ to itself.

Recall that any nontrivial element $\sigma'$ of $S_{n}$ can be written as a product of disjoint cycles. Suppose that $\sigma'$ has a cycle which has length at least $3$, say $(ijk\cdots)$. Then $\sigma'(\overline{f}_{i,j}+\sum \overline{f}_{p,q})=\overline{f}_{j,k}+\cdots$, where the sum ranges over all different $p$ and $q$ with $p, q\neq i, j$. Hence $\sigma'$ acts faithfully on $\Ker(\nu')$. Now assume that $\sigma'$ consists of disjoint transpositions, say $(ij)(kl)(rs)\cdots$. Then $\sigma'(\overline{f}_{i,k}+\overline{f}_{j,r}+\sum \overline{f}_{p,q})=\overline{f}_{j,l}+\overline{f}_{i,s}+\cdots$, where the sum ranges over all different $p$ and $q$ with $p, q\neq i, j, k, r$. Therefore, $S_{n}$ acts faithfully on $\Ker(\nu')$ with the previous case. Hence, by Lemma \ref{semidirectgeneric}, there exists a generically free representation for $T'_{n,2}\rtimes S_{n}$, therefore, \[\ed(T'_{n,2}\rtimes S_{n})\leq (n^2-n)/2-\rank((T'_{n,2})^{*})=(n^{2}-3n+2)/2.\]

\end{proof}

\bigskip
\noindent
{\it Proof of Theorem {\rm\ref{mainthm}(i)}.}\quad By (\ref{reductivenormalizer}), Lemma \ref{simpletnm}(ii) and Lemma \ref{lemmatrivial}, we have
\[\ed(\cat{Alg}_{n,2})\leq \ed(\gSL_{n}/\gmu_{2})\leq \ed(T'_{n,2}\rtimes S_{n})\leq (n^{2}-3n+2)/2.\] 
\qed
\bigskip

Let $P_{n}$ be a Sylow $2$-subgroup of the symmetric group $S_{n}$ on $n$ elements. In the following lemma, we compute an upper bound for the essential dimension of $T_{2^r,2}\rtimes P_{2^r}$.

\begin{lemma}\label{torsioncsa}
Let $F$ be an arbitrary field. Then for any $r\geq 2$, we have
\[
\ed(T_{2^r,2}\rtimes P_{2^r})\leq 2^{2r-2},
\]
where $P_{2^r}$ is a Sylow $2$-subgroup of $S_{2^r}$.
\end{lemma}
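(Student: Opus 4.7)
The plan is to apply Lemma \ref{semidirectgeneric} to the pair $(T_{2^r,2}, P_{2^r})$, exploiting the wreath-product structure of $P_{2^r}$. Identifying $\{1,\ldots,2^r\}$ with the leaves of a binary tree, one has $P_{2^r} \cong (P_{2^{r-1}} \times P_{2^{r-1}}) \rtimes \Z/2\Z$, acting by preserving the partition $\{1,\ldots,2^r\} = L \sqcup R$ into the two halves of size $2^{r-1}$, possibly with a top-level swap. Any set built symmetrically from this decomposition will be $P_{2^r}$-invariant.

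Using the notation $f_{i,j}$ and $g_k$ from the proof of Lemma \ref{simpletnm}(i), I would take
\[
X = \{f_{i,j} : i \in L,\ j \in R\} \cup \{g_k : 1 \leq k \leq 2^r\},
\]
which is $P_{2^r}$-invariant and satisfies $|X| = 2^{2r-2} + 2^r$. If a surjective $P_{2^r}$-equivariant map $\nu : \Z[X] \to (T_{2^r,2})^*$ can be produced (sending each generator to itself) with $P_{2^r}$ acting faithfully on $\Ker(\nu)$, then Lemma \ref{semidirectgeneric} immediately gives
\[
\ed(T_{2^r,2} \rtimes P_{2^r}) \leq |X| - \rank((T_{2^r,2})^*) = 2^{2r-2},
\]
as required.

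Surjectivity of $\nu$ should be routine: the identity $f_{k,i} + f_{k,j} + g_k = e_i + e_j$, valid whenever $k$ lies in the opposite half from $\{i,j\}$, recovers every same-side pair $f_{i,j}$ from the $\nu$-image, and combined with the cross pairs and the $g_k$ this yields all of $(T_{2^r,2})^*$. The main obstacle is the faithfulness of the $P_{2^r}$-action on $\Ker(\nu)$, and this is where I would concentrate the effort. Following the strategy of Lemma \ref{simpletnm}(i), for each nontrivial $\sigma \in P_{2^r}$ I would look for $i \in L$ and $j \in R$ with $\{\sigma(i),\sigma(j)\} \neq \{i,j\}$; then $2 f_{i,j} + g_i + g_j \in \Ker(\nu)$ is not fixed by $\sigma$, since $\sigma$ sends it to $2 f_{\sigma(i),\sigma(j)} + g_{\sigma(i)} + g_{\sigma(j)}$.

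Producing such a pair splits naturally along the surjection $P_{2^r} \twoheadrightarrow \Z/2\Z$ measuring the top-level swap. If $\sigma$ preserves each half, then by the symmetry $L \leftrightarrow R$ one may assume $\sigma$ moves some $i \in L$; any $j \in R$ then works, because $\sigma(i) \in L$ is the unique left-hand element of $\{\sigma(i),\sigma(j)\}$ and differs from $i$. If $\sigma$ swaps the two halves, pick any $i \in L$ and any $j \in R \setminus \{\sigma^{-1}(i)\}$, which exists since $|R| = 2^{r-1} \geq 2$; this is precisely where the hypothesis $r \geq 2$ enters. The chief subtlety will be to check that these two cases genuinely exhaust all nontrivial $\sigma$, and that the element $2 f_{i,j} + g_i + g_j$ indeed lies in $\Ker(\nu)$ (which is immediate from $\nu(f_{i,j}) = e_i + e_j$ and $\nu(g_k) = -2 e_k$).
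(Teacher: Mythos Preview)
Your construction is identical to the paper's: the same set $X$ of cross-half $f_{i,j}$'s together with all $g_k$'s, the same surjectivity check via $f_{i,k}+f_{j,k}+g_k$, and the same final count $|X|-\rank((T_{2^r,2})^*)=2^{2r-2}$. The only difference is in the faithfulness step: the paper observes that the kernel of the action is normal in the $2$-group $P_{2^r}$ and hence, if nontrivial, must contain the central element $\sigma=(1\,2)(3\,4)\cdots(2^r{-}1\,2^r)$, so a single check that $\sigma$ moves $2f_{1,2^{r-1}+1}+g_1+g_{2^{r-1}+1}$ suffices. Your direct case analysis (half-preserving versus half-swapping) is equally valid and more elementary, avoiding the center-of-a-$p$-group fact at the cost of a short extra argument; either way the proof goes through.
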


\begin{proof}
Note that a Sylow $2$-subgroup $P_{2^r}$ of $S_{2^r}$ is isomorphic to $(P_{2^{r-1}})^{2}\rtimes \Z/2\Z$. Consider the $e_{i,j}$, $f_{i,j}$ and $g_{k}$ as in the proof of Lemma \ref{simpletnm}. We divide the set of integers $\{1, 2,\cdots, 2^r\}$ into two subsets $\Lambda_{1}:=\{1, 2,\cdots, 2^{r-1}\}$ and $\Lambda_{2}:=\{2^{r-1}+1, 2^{r-1}+2,\cdots, 2^r\}$. Let $X$ be a set consisting of $f_{i,j}$ and $g_{k}$ for all $1\leq i\neq j \leq 2^r$ such that $i$ and $j$ are placed in different $\Lambda_{l}$'s and all $1\leq k \leq 2^{r}$, where $l$ is either $0$ or $1$. Then $X$ is a $P_{2^r}$-invariant subset of $(T_{2^r,2})^{*}$ and $|X|=2^{2r-2}+2^r$.

It is clear that $e_{i,j}$ and $f_{i,j}$ generate $(T_{2^r,2})^{*}$ as an abelian group, as the indices $i$ and $j$ run over $1$ to $2^r$. Note that $f_{i,j}=f_{i,k}+f_{j,k}+g_{k}$ for all $i$ and $j$ which are in the same $\Lambda_{l}$'s, where $l$ is either $1$ or $2$. As \[e_{i,j}=\begin{cases}
f_{i,j}+g_{j} & \text{if $i$ and $j$ are in different $\Lambda_{l}$'s},\\
f_{i,k}+f_{j,k}+g_{j}+g_{k} & \text{otherwise,}\\
\end{cases}
\]
$X$ generates $(T_{2^r,2})^{*}$ as an abelian group and hence we have a surjective $P_{2^r}$-equivariant homomorphism $\nu:\Z[X]\to (T_{2^r,2})^{*}$ taking $f_{i,j}$ and $g_{k}$ to themselves.

We show that $P_{2^r}$ acts faithfully on $\Ker(\nu)$. Note that the center of $P_{2^r}$, which is generated by $\sigma:=(1, 2)(3, 4)\cdots (2^{r}-1, 2^r)$ and it is enough to show that $\sigma$ acts faithfully on $\Ker(\nu)$. In fact, $\sigma$ does not fix the non-zero element $2f_{1,2^{r-1}+1}+g_{1}+g_{2^{r-1}+1}\in \Z[X]$. Hence, by Lemma \ref{semidirectgeneric}, there exists a generically free representation for $T_{2^r,2}\rtimes P_{2^r}$, therefore, \[\ed(T_{2^r,2}\rtimes P_{2^r})\leq 2^{2r-2}+2^{r}-\rank((T_{2^r,2})^{*})=2^{2r-2}.\]
\end{proof}

\bigskip
\noindent
{\it Proof of the first part in Theorem {\rm\ref{mainthm}(ii)}.}\quad
As $(2,[T_{2^r,2}\rtimes S_{2^r}:T_{2^r,2}\rtimes P_{2^r}])=1$, we have $\ed_{2}(T_{2^r,2}\rtimes S_{2^r})=\ed_{2}(T_{2^r,2}\rtimes P_{2^r})$ by (\ref{subgrouped}). Therefore, by Lemma \ref{torsioncsa} and (\ref{reductivenormalizer}),
\[\ed_{2}(\gGL_{2^{r}}/\gmu_{2})\leq \ed_{2}(T_{2^r,2}\rtimes S_{2^r})=\ed_{2}(T_{2^r,2}\rtimes P_{2^r})\leq \ed(T_{2^r,2}\rtimes P_{2^r})\leq 2^{2r-2}.\]\qed

\section{Algebras with involutions}\label{alginvillll}

Let $A$ be a central simple algebra over $F$. For any $a\in A^{\times}$, we denote the inner automorphism of $A$ by $\Int(a)$: $\Int(a)(x)=axa^{-1}$ for all $x\in A$. For any subalgebra $B$ of $A$, we write $C_{A}(B)$ for the centralizer of $B$ in $A$. The following lemma characterizes all involutions of the first kind on $A$.

\begin{lemma}[{\cite[Proposition 2.7]{Book}}]\label{involdeg2}
Let $F$ be a field of $\ch(F)\neq 2$, $A$ be a central simple algebra over $F$ and $\sigma$ be an involution of the first kind on $A$. Then every involution $\sigma'$ of the first kind on $A$ is of the form $\Int(a)\circ \sigma$ for some $a\in A^{\times}$ uniquely determined up to a factor in $F^{\times}$, such that $\sigma(a)=\pm a$. Moreover, $\sigma$ and $\sigma'$ are of the same type if and only if $\sigma(a)=a$.
\end{lemma}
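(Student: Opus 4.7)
The plan is to reduce existence of $a$ to the Skolem--Noether theorem, extract $\sigma(a) = \pm a$ from the involution identity, and identify the type through a dimension count on symmetric elements. First, since $\sigma$ and $\sigma'$ are anti-automorphisms of $A$ fixing $F$ pointwise, the composition $\sigma \circ \sigma'$ is an $F$-algebra automorphism of $A$. By Skolem--Noether there exists $b \in A^{\times}$ with $\sigma \circ \sigma' = \Int(b)$, so that $\sigma' = \sigma \circ \Int(b)$. A direct check using the fact that $\sigma$ reverses the order of multiplication gives $\sigma \circ \Int(b) = \Int(\sigma(b)^{-1}) \circ \sigma$, and taking $a := \sigma(b)^{-1}$ yields $\sigma' = \Int(a) \circ \sigma$. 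If in addition $\sigma' = \Int(a') \circ \sigma$, then $\Int(a) = \Int(a')$, and since $\ZZ(A) = F$ this forces $a' \in F^{\times}\cdot a$, giving the claimed uniqueness up to a scalar in $F^{\times}$.

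Next I would impose $\sigma'^{2} = \id$. A straightforward computation using $\sigma(a^{-1}) = \sigma(a)^{-1}$ and $\sigma^{2} = \id$ unwinds to
\[
\sigma'\bigl(\sigma'(x)\bigr) = a\,\sigma(a)^{-1}\,x\,\sigma(a)\,a^{-1}
\]
for every $x \in A$, forcing $a\,\sigma(a)^{-1} \in \ZZ(A) = F^{\times}$. Writing $a = \lambda\,\sigma(a)$ with $\lambda \in F^{\times}$ and applying $\sigma$ (which fixes $\lambda$ since $\sigma$ is of the first kind) gives $\sigma(a) = \lambda a$; substituting back yields $\lambda^{2} = 1$, so $\sigma(a) = \pm a$.

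For the type statement, set $\epsilon = \pm 1$ with $\sigma(a) = \epsilon a$ and observe that $x \in \operatorname{Sym}(A,\sigma')$ if and only if $a\,\sigma(x)\,a^{-1} = x$, which after the substitution $x = ay$ is equivalent to $\sigma(y) = \epsilon y$. Thus left multiplication by $a$ yields an $F$-linear isomorphism between $\operatorname{Sym}(A,\sigma')$ and $\{y \in A : \sigma(y) = \epsilon y\}$. Passing to a splitting field where $\deg A = n$, the latter space has dimension $n(n+1)/2$ when $\epsilon = 1$ and $n(n-1)/2$ when $\epsilon = -1$, and these two dimensions characterize the orthogonal versus symplectic type. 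Hence $\sigma$ and $\sigma'$ are of the same type precisely when $\sigma(a) = a$. The only delicate step is the computation of $\sigma'^{2}$; the remaining assertions are formal manipulations combined with the standard dimension count distinguishing the two types of involutions.
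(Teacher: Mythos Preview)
Your argument is correct and is essentially the standard proof one finds in the cited reference \cite[Proposition~2.7]{Book}; the paper itself does not supply a proof of this lemma but simply quotes it. One small imprecision: your sentence ``the latter space has dimension $n(n+1)/2$ when $\epsilon = 1$ and $n(n-1)/2$ when $\epsilon = -1$'' tacitly assumes $\sigma$ is orthogonal; if $\sigma$ is symplectic the two dimensions are interchanged. The conclusion is unaffected, since in either case $\dim\operatorname{Sym}(A,\sigma') = \dim\operatorname{Sym}(A,\sigma)$ exactly when $\epsilon = 1$, which is what forces the types to agree.
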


We use the following lemma for extension of involutions.

\begin{lemma}[{\cite[Theorem 4.14]{Book}}]\label{extinvol}
Let $F$ be a field of $\ch(F)\neq 2$, $A$ be a central simple algebra over $F$ with an involution $\sigma$ of the first kind, and $B$ be a simple subalgebra of $A$ with an involution $\tau$ such that $\tau |_{F}=\sigma |_{F}$. Then $A$ has involutions of both types whose restriction to $B$ is $\tau$, unless $\tau$ is of the first kind and the degree $\deg(C_{A}(B))$ is odd.
\end{lemma}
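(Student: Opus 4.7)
The plan is to reduce the construction of extensions of $\tau$ to a question about involutions on the centralizer $C_A(B)$ via the double centralizer theorem, and then to invoke the classical parity criterion for the existence of symplectic involutions on central simple algebras of $2$-torsion Brauer class.

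I would first produce at least one extension of $\tau$ to $A$. Since $\sigma\circ\tau$ and the inclusion $B\hookrightarrow A$ give two embeddings of $B$ into $A$ that agree on $F$, the Skolem-Noether theorem yields $a\in A\m$ with $\sigma(\tau(x))=axa\inv$ for all $x\in B$; a short argument exploiting $\sigma^{2}=\id$ shows that $a$ may be adjusted so that $\sigma(a)=\pm a$, and then $\Int(a\inv)\circ\sigma$ is an involution of $A$ restricting to $\tau$ on $B$. Equivalently, after first using Skolem-Noether to ensure $\sigma(B)=B$, Lemma \ref{involdeg2} applied over $B$ writes $\tau=\Int(b)\circ(\sigma|_{B})$ for some $b\in B\m$ with $\sigma(b)=\pm b$, and $\Int(b)\circ\sigma$ gives an explicit extension.

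Next I would parametrize all extensions and read off their types. Any two extensions of $\tau$ differ by an inner automorphism $\Int(c)$ with $c\in C_A(B)\m$ and $\sigma(c)=\pm c$, by Lemma \ref{involdeg2} again. Under the canonical decomposition $A\cong B\tens_F C_A(B)$ (valid when $Z(B)=F$), extensions correspond to first-kind involutions $\tau'$ on $C_A(B)$ via $\sigma''\leftrightarrow \tau\tens\tau'$, and the tensor product rule for types yields an orthogonal involution when $\tau$ and $\tau'$ share the same type and a symplectic one otherwise. Hence both types of extension exist if and only if $C_A(B)$ admits both types of first-kind involutions. The Brauer class $[C_A(B)]=[A]-[B]$ is $2$-torsion in $\Br(F)$, so $C_A(B)$ always admits an orthogonal involution, and it admits a symplectic involution if and only if $\deg C_A(B)$ is even (the standard criterion, reducing over a splitting field to the fact that an alternating bilinear form on $F^{n}$ exists only for $n$ even). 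This yields exactly the stated dichotomy in the first-kind case.

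For $\tau$ of the second kind, the center $Z(B)=K$ is a separable quadratic extension of $F$, and the same strategy applied over $K$ via the identification $C_A(K)\cong B\tens_K C_A(B)$ parametrizes the extensions by second-kind involutions on $C_A(B)$; in this setting no parity obstruction arises on $\deg C_A(B)$, and both orthogonal and symplectic first-kind extensions of $\tau$ to $A$ always exist. The main technical point I expect to be delicate is the verification of the type multiplication rule under the tensor decomposition $A\cong B\tens_F C_A(B)$, together with the Skolem-Noether adjustment in the first step, which must be made compatibly with the sign condition $\sigma(a)=\pm a$ needed to ensure that the resulting map is still an involution.
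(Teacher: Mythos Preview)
The paper does not give a proof of this lemma: it is stated with a citation to \cite[Theorem 4.14]{Book} and used as a black box. So there is no ``paper's own proof'' to compare against.

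Your sketch is essentially the standard argument from the cited reference. The reduction via the double centralizer theorem to the centralizer $C_A(B)$, the parametrization of extensions by involutions on $C_A(B)$ using Lemma~\ref{involdeg2}, the tensor-product type rule, and the parity criterion for symplectic involutions are exactly the ingredients used in \cite{Book}. One point you pass over quickly is the second-kind case: you assert that ``no parity obstruction arises,'' but you should check that $C_A(B)$ actually carries a unitary $K/F$-involution (this follows from $\cor_{K/F}[C_A(B)]=0$, using that $2[A]=0$ and that $B$ already has a second-kind involution), and then that varying this unitary involution produces extensions of both first-kind types on $A$. This last step is where most of the work in \cite[Theorem 4.14]{Book} lies, and your sketch would benefit from acknowledging it explicitly rather than folding it into ``no parity obstruction.''
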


From now on until the end of this section, we assume that the base field $F$ is $2$-closed (i.e., every finite extension of $F$ is separable of degree a power of $2$) and is of characteristic different from $2$.

\begin{proposition}\label{mainproposition}
Let $r\geq 3$ be an integer, $F$ a $2$-closed field such that $\ch(F)\neq 2$ and $D$ a division $F$-algebra of degree $2^{r}$ and exponent $2$. Then for any biquadratic field extension $K_{1}K_{2}/F$ in $D$ with quadratic field extensions $K_{1}/F$ and $K_{2}/F$ there exists a quadratic extension $K_{3}/F$ in $D$ such that $K_{1}K_{2}K_{3}/F$ is a triquadratic extension in $D$.
\end{proposition}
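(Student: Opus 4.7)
Set $C := C_D(K_1 K_2)$. Since $K_1 K_2$ is a subfield of $D$ of $F$-degree $4$, the double-centralizer theorem makes $C$ a central simple $K_1 K_2$-algebra of degree $2^{r-2}$; it is a division algebra (as $D$ is), and its Brauer class is the image of $[D]\in\Br(F)$ under restriction, so $C$ has exponent dividing $2$, and in fact exactly $2$ since $\deg C\geq 2$ for $r\geq 3$. The plan is to produce a quadratic $F$-subfield $K_3\subset C$ with $K_3\not\subseteq K_1 K_2$: such $K_3$ automatically commutes with $K_1 K_2$, so $K_1 K_2 K_3$ is a commutative $F$-subalgebra of the division algebra $D$, hence a subfield of $F$-degree $8$ -- triquadratic over $F$.

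First, I would apply Lemma \ref{extinvol} to $B = K_1 K_2\hookrightarrow D$ with $\tau = \id_{K_1 K_2}$: the exceptional clause fails because $\deg C_D(K_1 K_2) = 2^{r-2}$ is even for $r\geq 3$, so $D$ admits an involution $\sigma$ of the first kind restricting to $\id$ on $K_1 K_2$, and then $\tau := \sigma|_C$ is an involution of the first kind on the $K_1 K_2$-algebra $C$. The core task is to produce $c\in C\setminus K_1 K_2$ with $c^2\in F^\times$; then $K_3 := F(c)$ works. I would first handle the base case $r = 3$, where $C$ is a quaternion division algebra over $K := K_1 K_2$. Any non-zero trace-zero $c\in C$ satisfies $c^2 = -\Nrd_C(c)\in K$ and automatically lies outside $K$, so it suffices to find a non-zero pure quaternion $c$ with $\Nrd_C(c)\in F$. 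The condition ``$\Nrd_C(c)\in F$'' cuts out a variety in the $12$-dimensional $F$-space of pure quaternions of $C$ by three quadratic $F$-polynomial equations, and I would exhibit a non-trivial $F$-rational solution by combining the $2$-closedness of $F$ with Lemma \ref{involdeg2}, which parametrizes all first-kind involutions of $D$ extending $\id_{K_1 K_2}$ by elements $u\in C^\times$ with $\sigma(u) = \pm u$, supplying distinguished $\tau$-symmetric or skew elements of $C$ whose squares can be controlled. For $r\geq 4$ I would reduce to the quaternion case by selecting a $\tau$-stable quaternion $K_1 K_2$-subalgebra $Q\subset C$ and repeating the construction inside $Q$.

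The main obstacle will be Step~2 in the case $r = 3$: producing a non-zero $F$-rational point of the norm variety on pure quaternions of $C$. Over an arbitrary field this variety can fail to have non-trivial $F$-points, so the hypothesis that $F$ is $2$-closed must enter essentially. The likely mechanism is a descent argument -- a non-trivial solution over a finite $2$-power-degree extension of $F$ descends to $F$ because $F$ has no proper prime-to-$2$ extensions -- together with Lemma \ref{involdeg2} providing a supply of candidate elements of $C$ whose squares can be analyzed using the exponent-$2$ structure inherited from $D$.
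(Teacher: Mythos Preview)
Your proposal has a genuine gap at precisely the step you flag as the ``main obstacle,'' and the setup you have chosen makes that step harder, not easier, than the paper's route.

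By taking $\sigma$ to restrict to the \emph{identity} on $K_1K_2$, the induced involution $\tau=\sigma|_C$ is $K_1K_2$-linear. Every construction you then perform inside $(C,\tau)$ --- symmetric/skew elements, Lemma~\ref{involdeg2} applied to $C$, the pure-quaternion norm form --- is intrinsically over $K_1K_2$, not over $F$. There is no mechanism in your outline for forcing $c^2$ down to $F$ rather than merely to $K_1K_2$; the ``descent from a $2$-power extension'' sketch does not address this, since the issue is not prime-to-$2$ descent but descent along $K_1K_2/F$ itself. The system of three quadratic equations in twelve $F$-variables that you write down is correct, but nothing in your toolkit produces an $F$-point on it.

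The paper's argument avoids this problem by choosing involutions that restrict to two \emph{distinct nontrivial} automorphisms $\tau_1,\tau_2$ of $K_1K_2$. The element $d\in D^\times$ with $\sigma_1=\Int(d)\circ\sigma_2$ then satisfies $\Int(d)|_{K_1K_2}=\tau_1\tau_2\neq\id$, so $d\notin C$ but $d^2\in C$; and since $d$ is fixed by both $\sigma_1$ and $\sigma_2$, one gets $F(d^2)\cap K_1K_2=F$ for free. This is exactly the descent to $F$ you are missing. From there the paper does \emph{not} insist that $d^2\in F$: if $d^2\notin F$, then $F(d^2)\subset C$ is a nontrivial commutative $F$-algebra disjoint from $K_1K_2$, and $2$-closedness (via Rowen--Saltman) extracts a quadratic $K_3\subset F(d^2)$. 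Only in the residual case $d^2\in F$ does one pass to quaternion subalgebras $(K_i,d^2)$ and centralize again. So the paper never needs the strong condition $c^2\in F$ that your approach hinges on.

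Your reduction for $r\geq 4$ inherits the same defect: passing to a $\tau$-stable quaternion $K_1K_2$-subalgebra of $C$ just reproduces the unresolved $r=3$ situation.
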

\begin{proof}
By \cite[Theorem 3.1(1)]{Book}, $D$ has an involution of the first kind $\sigma$. Let $\tau_{1}$ and $\tau_{2}$ be two distinct nontrivial automorphisms of the field $K_{1}K_{2}$. As $\sigma|_{F}=\tau_{i}|_{F}$ for any $i=1,2$, there are two distinct involutions $\sigma_{1}$ and $\sigma_{2}$ of the same type on $A$ such that $\sigma_{i}|_{K_{1}K_{2}}=\tau_{i}$ by Lemma \ref{extinvol}.

By Lemma \ref{involdeg2}, there exists $d\in D^{\times}$ such that $\sigma_{1}=\Int(d)\circ \sigma_2$ and $\sigma_{i}(d)=d$ for all $i=1,2$. In particular, $d^{2}$ commutes with $K_{1}$ and $K_{2}$ and $F(d^{2})\cap K_{1}K_{2}=F$. If $F(d^{2})\neq F$, then $F(d^{2})$ contains a quadratic extension $K_{3}$ over $F$ by \cite[Proposition 1.1]{RowSalt}. Hence we have a triquadratic extension $K_{1}K_{2}K_{3}$ in $D$.

Suppose that $d^{2}\in F$. Then there exist quaternion subalgebras $Q_{1}:=(K_{1},d^{2})$ and $Q_{2}:=(K_{2},d^{2})$ of $D$. As the index $\ind(C_{D}(Q_{1}\tens Q_{2}))$ is bigger than or equal to $2$, $C_{D}(Q_{1}\tens Q_{2})$ contains a quadratic extension $K_{3}/F$ by \cite[Proposition 1.1]{RowSalt}. Therefore, we have \[K_{1}K_{2}K_{3}=K_{1}\tens K_{2}\tens K_{3}\subset Q_{1}\tens Q_{2}\tens C_{D}(Q_{1}\tens Q_{2})=D.\]
\end{proof}

\begin{corollary}\label{etalesubalgebra}
Let $r\geq 3$ be an integer and $F$ be a $2$-closed field such that $\ch(F)\neq 2$. Then for any division $F$-algebra $D$ of degree $2^{r}$ and exponent $2$ and an \'etale subalgebra $K_{1}K_{2}:=K_{1}\tens K_{2}$ of $D$ such that $\dim_{F}(K_{i})=2$ for $i=1,2$, there exists a maximal \'etale subalgebra $K_{1}K_{2}K:=K_{1}\tens K_{2}\tens K$ of $D$ with $\dim_{F}(K)=2^{r-2}$.
\end{corollary}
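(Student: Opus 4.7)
The plan is to argue by induction on $r \geq 3$, using Proposition \ref{mainproposition} to produce a third quadratic subfield at each step and then descending to the centralizer of that subfield. For the base case $r = 3$ the proposition yields a quadratic extension $K_3/F$ with $K_1 \otimes K_2 \otimes K_3 \subset D$ of $F$-dimension $8 = \deg D$, which is already a maximal étale subalgebra, so $K := K_3$ works.

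For the inductive step $r \geq 4$, I would apply the proposition to get a quadratic $K_3/F$ with $K_1 \otimes K_2 \otimes K_3 \subset D$ triquadratic and then pass to $D' := C_D(K_3)$. Four points need to be verified so that the inductive hypothesis applies to $D'$ over $K_3$: (a) $D'$ is a division algebra, being a subalgebra of $D$ closed under inverses (if $d$ commutes with $K_3$, so does $d^{-1}$); (b) $D'$ is central simple over $K_3$ of degree $2^{r-1}$ by the double centralizer theorem; (c) $\exp(D') = 2$, since $[D']$ is, up to sign, the restriction of $[D]$ to $\Br(K_3)$ so $\exp(D') \mid 2$, while $D'$ is a nontrivial division algebra and hence has $\exp(D') \neq 1$; and (d) $K_3$ is $2$-closed of characteristic $\neq 2$, because every finite extension of $K_3$ is finite over $F$ and thus separable of $2$-power degree. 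Inside $D'$ the fields $K_1 K_3$ and $K_2 K_3$ are quadratic over $K_3$ and linearly disjoint with compositum $K_1 K_2 K_3$, so they form a biquadratic extension of $K_3$.

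Applying the inductive hypothesis to $D'/K_3$ with this biquadratic then produces an étale $K_3$-algebra $K'$ of $K_3$-dimension $2^{r-3}$ such that $(K_1 K_3) \otimes_{K_3} (K_2 K_3) \otimes_{K_3} K'$ is a maximal étale subalgebra of $D'$. As an $F$-algebra, $K'$ is étale of $F$-dimension $2 \cdot 2^{r-3} = 2^{r-2}$, and since $K_3 \subset K'$ we get the canonical identification
\[
K_1 \otimes_F K_2 \otimes_F K' \;=\; (K_1 K_3) \otimes_{K_3} (K_2 K_3) \otimes_{K_3} K' \;\subset\; D,
\]
so $K := K'$ completes the induction. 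The most delicate part of the argument is ensuring that the reduced algebra $D'$ over $K_3$ really satisfies the hypotheses of the inductive statement — especially (c), where one must exclude the degenerate case $\exp(D') = 1$ — after which the remaining tensor-product bookkeeping is routine.
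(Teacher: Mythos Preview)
Your proof is correct and follows essentially the same induction as the paper: apply Proposition~\ref{mainproposition} to produce $K_3$, pass to the centralizer $C_D(K_3)$, and invoke the inductive hypothesis over $K_3$ with the biquadratic $(K_1K_3)\otimes_{K_3}(K_2K_3)$. You are in fact more careful than the paper in explicitly verifying that $C_D(K_3)$ satisfies all the hypotheses (division, degree $2^{r-1}$, exponent exactly $2$, base field still $2$-closed) before invoking the induction.
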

\begin{proof}
By Proposition \ref{mainproposition}, there exists a triquadratic field extension $K_{1}K_{2}K_{3}$ over $F$. Induction on $r$. If $r=3$, then $K=K_{3}$ satisfies the conclusion of corollary. For $r\geq 3$, the centralizer $C_{D}(K_{3})$ is a division $K_{3}$-algebra of degree $2^{r-1}$. By the induction hypothesis with $K_{1}K_{3}/K_{3}$ and $K_{2}K_{3}/K_{3}$, $C_{D}(K_{3})$ contains a subfield $K/F$ with $[K:K_{3}]=2^{r-3}$. Hence $D$ contains a field extension $K_{1}K_{2}K$ over $F$ such that $\dim_{F}(K)=2^{r-3}\cdot 2$.
\end{proof}

\section{Proof of the second part in Theorem {\rm\ref{mainthm}(ii)}}\label{ed2sec}

Let $n\geq 2$ be an integer, $G$ a subgroup of $S_{n}$ and $X$ a $G$-set of $n$ elements ($G$ acts on $X$ by permutations). For any divisor $m$ of $n$, we consider the surjective $G$-modules homomorphism $\bar\varepsilon:\Z[X]\to \Z/m\Z$, defined by $\bar\varepsilon(x)=\varepsilon(x)+m\Z$, where $\varepsilon: \Z[X]\to\Z$ is the \emph{augmentation homomorphism} given by $\varepsilon(x)=1$ for all
$x\in X$. Set $J=\Ker(\bar\varepsilon)$. Then we have an exact sequence
\begin{equation}\label{JXsequence}
0\to J\to \Z[X]\xra{\bar{\varepsilon}}\ \Z/m\Z \to 0.
\end{equation}
We shall need the following lemma (see also the proof of \cite[Theorem
8.1]{BM10}):

\begin{lemma}\label{lemmabrau}
Let $F$ be a field of $\ch(F)\mathrel{\not |}n$ and $T=\Spec F[J]$
be the split torus with the character group $J$. Then \[H^{1}(F,
T\rtimes G)=\coprod_{\Gal(E/F)=G}\Br_{m}(E/F),\] where the disjoint
union is taken over all isomorphism classes of Galois $G$-algebras
$E/F$.
\end{lemma}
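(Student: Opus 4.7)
The plan is to identify $H^{1}(F, T \rtimes G)$ via the central extension of $F$-group schemes induced by (\ref{JXsequence}), and to split the resulting cohomology over the base $H^{1}(F, G)$ that classifies Galois $G$-algebras.

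First, I apply the functor $\Spec F[-]$ to (\ref{JXsequence}). Under the assumption $\ch(F) \nmid n$, and hence $\ch(F) \nmid m$, this yields a short exact sequence of smooth $F$-group schemes
\[
1 \to \gmu_{m} \to R \to T \to 1,
\]
where $R := \Spec F[\Z[X]] = \gm^{X}$ is a quasi-trivial torus and $\gmu_{m} \hookrightarrow R$ is the diagonal embedding dual to $\bar\varepsilon$. Since $G$ acts trivially on $\Z/m\Z$, it acts trivially on $\gmu_{m}$, which is therefore central in $R \rtimes G$. Passing to semidirect products with $G$ produces the central extension
\[
1 \to \gmu_{m} \to R \rtimes G \to T \rtimes G \to 1.
\]

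Next, I project the semidirect products onto $G$ and twist the extension by a cocycle $\eta$ representing a given Galois $G$-algebra $E/F$. The character module of ${}_{\eta}R$ is $\Z[X]$ with $\Gal(\bar F/F)$-action factoring through $\eta$, so ${}_{\eta}R$ is a product of Weil restrictions $R_{L/F}(\gm)$ over the twisted Galois orbits on $X$, each $L$ being a subalgebra of $E$. Hilbert 90 combined with Shapiro's lemma gives $H^{1}(F, {}_{\eta}R) = 0$, and the long exact sequence attached to the twisted central extension yields an injection
\[
H^{1}(F, {}_{\eta}T) \hookrightarrow H^{2}(F, \gmu_{m}) = \Br_{m}(F)
\]
whose image is the kernel of $\Br_{m}(F) \to H^{2}(F, {}_{\eta}R)$, i.e., the relative Brauer group $\Br_{m}(E/F)$ attached to $E$.

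Finally, the twisting principle identifies the fiber of $H^{1}(F, T \rtimes G) \to H^{1}(F, G)$ over $[\eta]$ with $H^{1}(F, {}_{\eta}T)$ modulo the residual action of $G(F)$, which acts trivially on the image in $\Br_{m}(F)$; assembling these fibers over all isomorphism classes of Galois $G$-algebras produces the claimed disjoint union decomposition. The main obstacle is the last identification: one must carry out the Shapiro-type computation compatibly with the $G$-set structure on $X$ so that $\Ker(\Br_{m}(F) \to H^{2}(F, {}_{\eta}R))$ is correctly interpreted as $\Br_{m}(E/F)$, and verify that the residual $G(F)$-action on $H^{1}(F, {}_{\eta}T)$ does not collapse any classes. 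This is essentially the machinery worked out in the proof of \cite[Theorem 8.1]{BM10}, which the author cites as the template for this lemma.
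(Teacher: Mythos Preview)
Your proposal is correct and follows essentially the same route as the paper's proof: both decompose $H^{1}(F,T\rtimes G)$ into fibers over $H^{1}(F,G)$ via the twisting principle, dualize (\ref{JXsequence}) to the exact sequence $1\to\gmu_m\to\gm^{X}\to T\to 1$, twist by a cocycle $\gamma$ to obtain $1\to\gmu_m\to R_{E/F}(\gme)\to T_{\gamma}\to 1$, invoke Hilbert~90 and Shapiro to get $H^{1}(F,T_{\gamma})\hookrightarrow\Br_m(F)$ with image $\Br_m(E/F)$, and then observe that the residual action is trivial because it factors through the trivial $G$-action on $\gmu_m$. One small point: the group acting on the fiber is $G_{\gamma}(F)$ rather than $G(F)$ (cf.\ \cite[Proposition 28.11]{Book}), but since you show the action is trivial on the image in $\Br_m(F)$ this distinction is immaterial.
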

\begin{proof}
Let $T_{\gamma}$ (respectively, $G_{\gamma}$) be the twist of $T$
(respectively, $G$) by the $1$-cocycle $\gamma\in Z^{1}(F, G)$.
Then by \cite[Proposition 28.11]{Book}, there is a natural bijection
between the fiber of $H^{1}(F, T\rtimes G)\to H^{1}(F,G)$ over
$[\gamma]$ and the orbit set of the group $G_\gamma(F)$ in $H^{1}(F,
T_{\gamma})$, i.e.,
\begin{equation}\label{cop}
H^{1}(F, T\rtimes G)\simeq\coprod H^{1}(F, T_{\gamma})/G_\gamma(F),
\end{equation}
where the coproduct is taken over all $[\gamma]\in H^{1}(F, G)$.

Let $E$ be the Galois $G$-algebra over $F$ associated to
$\gamma$. From (\ref{JXsequence}), we have the
corresponding exact sequence of algebraic groups
\[1\to \gmu_{m}\to \gm^{n} \to T\to 1\] and then the exact sequence
\begin{equation}\label{ccoopp}
1\to \gmu_{m}\to R_{E/F}(\gme)\to T_{\gamma}\to 1,
\end{equation}
each term of which is twisted by $\gamma$.
The exact sequence (\ref{ccoopp}) induces an exact sequence of Galois cohomology
\begin{equation}\label{cohomolnothing}
1\to H^{1}(F, T_{\gamma})\to H^{2}(F,\gmu_{m})=\Br_{m}(F)\to
H^{2}(E,\gme)=\Br(E)
\end{equation} by Eckmann-Faddeev-Shapiro's lemma and Hilbert's $90$.
The $G$-action on $R_{E/F}(\gme)$ restricts to the trivial action on
the subgroup $\gmu_{m}$. Let $\sigma\in G_\gamma(F)$ acts on
$T_{\gamma}=R_{E/F}(\gme)/\gmu_{m}$. The action of $\sigma$ and
(\ref{cohomolnothing}) induce the following diagram
\begin{equation*}
\xymatrix{
H^1(F,T_{\gamma}) \ar@{->}^{\sigma^{\ast}}[d]\ar@{^{(}->}[r] & H^{2}(F,\gmu_{m}) \ar@{=} [d]  \\
H^1(F,T_{\gamma})\ar@{^{(}->}[r]  & H^{2}(F,\gmu_{m}).\\
}
\end{equation*}
Therefore, $G_\gamma(F)$ acts trivially on $H^1(F,T_{\gamma})$, hence the result follows from (\ref{cop}).
\end{proof}

Let $r\geq 3$ be an integer. Let $G_{r}=S_{2}\times S_{2}\times S_{2^{r-2}}$ be a subgroup of the symmetric group $S_{2^{r}}$ on $2^{r}$ elements and let $H_{r}=S_{2}\times S_{2}\times P_{2^{r-2}}$ be a Sylow $2$-subgroup of $G_{r}$, where $P_{2^{r-2}}$ is a Sylow $2$-subgroup of $S_{2^{r-2}}$. Let $X_{r}$ be a $G_{r}$-set of $2^{r}$ elements ($G_{r}$ acts on $X_{r}$ by permutations). The action of $H_{r}$ may be described as follows: we subdivide the integers $1,2,\cdots, 2^{r}$ into four blocks $B_{1},B_{2},B_{3},B_{4}$ such that each block consists of $2^{r-2}$ consecutive integers. The $P_{2^{r-2}}$ permutes the elements of $B_{i}$ for all $1\leq i \leq 4$, $S_{2}$ interchanges $B_{2i-1}$ and $B_{2i}$ for all $i=1,2$, and another $S_{2}$ interchanges $B_{1}\cup B_{2}$ and $B_{3}\cup B_{4}$.

We set $J_{r}=\Ker(\Z[X_{r}]\xra{\bar{\varepsilon}} \Z/2\Z)$, where $\bar{\varepsilon}$ is the map with $m=2$ as in (\ref{JXsequence}). Applying Lemma \ref{lemmabrau} with $n=2^{r}$, $m=2$, $G=G_{r}$, $X=X_{r}$, $J=J_{r}$, and $T=T_{r}:=\Spec(F[J_{r}])$, we have a morphism \[\theta:H^{1}(-,T_{r}\rtimes G_{r}) \to \cat{Alg}_{2^{r},2}\] defined by $\theta(N)([A])=B$ for a field extension $N$ over $F$, where $[A]\in \Br_{2}(L/N)$ for some field extension $L/N$ with $\Gal(L/N)=G_{r}$ and $B$ is the central simple $N$-algebra of degree $2^{r}$ such that $[A]=[B]$ in $\Br_{2}(L/N)$.

Consider the morphism
\begin{equation}\label{thetaa}
\Theta:H^{1}(-,T_r\rtimes G_r) \coprod \big(\coprod_{1\leq i \leq r-1}\cat{Alg}_{2^{i},2}\big) \to \cat{Alg}_{2^{r},2}
\end{equation}
defined by \[[A]\mapsto \theta(N)([A]),\quad A_{i}\mapsto M_{2^{r-i}}(A_{i})\] over a field extension $N$ over $F$, where $A_{i}\in \cat{Alg}_{2^{i},2}(N)$ for $1\leq i \leq r-1$.

\begin{lemma}\label{surjectlemm}
If the base field $F$ is $2$-closed and is of characteristic different $2$, then $\Theta$ is surjective.
\end{lemma}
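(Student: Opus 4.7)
The plan is to fix $N \in \cat{Fields}/F$ and $B \in \cat{Alg}_{2^r,2}(N)$ and produce a preimage of $B$ under $\Theta(N)$, splitting into two cases according to $\ind(B)$. In the easy case $\ind(B) = 2^i$ with $i \leq r-1$, I would write $B \cong M_{2^{r-i}}(D_i)$ where $D_i$ is the underlying division algebra; then $D_i \in \cat{Alg}_{2^i,2}(N)$ (or, if $B$ is split, $M_2(N) \in \cat{Alg}_{2,2}(N)$) lies in one of the coproduct summands on the left of \eqref{thetaa} and is sent by $\Theta$ to a matrix algebra isomorphic to $B$. This case is dispatched in one line.

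The substantive case is $B = D$ a division $N$-algebra of degree $2^r$. The strategy is to manufacture a Galois $G_r$-algebra $E/N$ that splits $D$: once this is in hand, $[D] \in \Br_2(E/N)$, and Lemma \ref{lemmabrau} converts the pair $(E,[D])$ into an element of $H^1(N, T_r \rtimes G_r)$ whose image under $\theta$, and hence under $\Theta$, is $D$. To build such $E$ I would first invoke Corollary \ref{etalesubalgebra} (using $r\geq 3$, $\ch(F)\neq 2$, and the $2$-closedness hypothesis) to realize a maximal étale subalgebra of $D$ in the form $L = K_1 \otimes_N K_2 \otimes_N K$ with $[K_i:N] = 2$ and $K/N$ a field of degree $2^{r-2}$. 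Since $\dim_N L = 2^r = \deg(D)$, the subalgebra $L$ is self-centralizing, and a maximal étale subalgebra of a central simple algebra always splits it, giving $D \otimes_N L \cong M_{2^r}(L)$.

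Next I would promote $L$ to a Galois $G_r$-algebra by inducing on the third factor. Let $\tilde K/N$ be the Galois closure of $K$ inside a separable closure, set $H = \Gal(\tilde K/N) \leq S_{2^{r-2}}$, and form $E_3 = \Ind_H^{S_{2^{r-2}}} \tilde K$. As an $N$-algebra $E_3$ is a product of copies of $\tilde K$ indexed by $S_{2^{r-2}}/H$ and carries a natural $S_{2^{r-2}}$-Galois structure; in particular it contains $\tilde K \supset K$ as an $N$-subalgebra. Then $E := K_1 \otimes_N K_2 \otimes_N E_3$ is a Galois $G_r = S_2 \times S_2 \times S_{2^{r-2}}$-algebra over $N$ that contains $L$ as an $N$-subalgebra; extending scalars from $L$ to $E$ gives $D \otimes_N E \cong (D\otimes_N L)\otimes_L E \cong M_{2^r}(L)\otimes_L E \cong M_{2^r}(E)$, so $E$ splits $D$ as required.

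The main obstacle is the first step of the division case, namely securing the precise three-factor decomposition $L = K_1\otimes K_2\otimes K$ of a maximal étale subalgebra of $D$. This is exactly what Corollary \ref{etalesubalgebra} supplies, and it in turn rests on Proposition \ref{mainproposition} and the involution-theoretic analysis in Section \ref{alginvillll}; the hypotheses $\ch(F)\neq 2$ and $F$ being $2$-closed enter the argument precisely here. Once this structural input is available, the remaining steps — the matrix-algebra case, the induction $\Ind_H^{S_{2^{r-2}}}$, and the Brauer-theoretic repackaging via Lemma \ref{lemmabrau} — are formal.
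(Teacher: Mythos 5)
Your argument follows the paper's proof essentially step for step: reduce to the case of a division algebra $D$ of degree $2^r$, produce a maximal \'etale subalgebra of $D$ of the form $K_1\otimes_N K_2\otimes_N K$ via Corollary \ref{etalesubalgebra}, and translate this into an element of $H^1(N,T_r\rtimes G_r)$ hitting $D$ under $\theta$. One small point to tighten: Corollary \ref{etalesubalgebra} takes the biquadratic subalgebra $K_1K_2\subset D$ as a \emph{hypothesis} rather than producing it, so its existence should be cited separately from \cite[Theorem 1.2]{RowSalt}, as the paper does; conversely, your explicit construction of the Galois $G_r$-algebra $E=K_1\otimes K_2\otimes E_3$ (taking the $S_{2^{r-2}}$-closure of $K$) spells out a step the paper leaves implicit.
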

\begin{proof}
We show that $\Theta(N)$ is surjective for a field extension $N/F$. By the definition of $\Theta$, we only need to check the surjectivity for a division $N$-algebra $D$ of degree $2^r$ and exponent $2$. By \cite[Theorem 1.2]{RowSalt}, there exists an \'etale subalgebra $K_{1}K_{2}$ in $D$ such that $\dim_{N}(K_{i})=2$ for $i=1,2$. By Corollary \ref{etalesubalgebra}, there exists a maximal \'etale subalgebra $K_{1}K_{2}K$ in $D$ such that $\dim_{N}(K)=2^{r-2}$. Hence $\theta$ is surjective and so is $\Theta$.
\end{proof}

\begin{example}[see {\cite[Remark 3.10]{BM10}}]\label{firstexamp}
Let $r=3$. Then $G_3=H_3=S_{2}\times S_{2}\times S_{2}:=\langle\tau_{1}\rangle \times \langle\tau_{2}\rangle \times \langle\tau_{3}\rangle$. As the action of $H_3$ on $X_3$ is simply transitive, $X_3\simeq H_3$ as $H_3$-sets, hence $J_3$ is generated by $2$ and $\tau_{i}-1$ for $i=1,2,3$. Set $\Lambda_3:=\Z[H_3/\langle\tau_{1}\rangle]\oplus\Z[H_3/\langle\tau_{2}\rangle]\oplus\Z[H_3/\langle\tau_{3}\rangle]\oplus\Z[H_3/\langle\tau_{1}\tau_{2}\rangle]$. Define a map $\rho:\Lambda_3\to J_3$ by \[\rho(\overline{x_{1}},\overline{x_{2}},\overline{x_{3}},\overline{x_{4}})=\sum_{i=1}^{3}(\tau_{i}+1)x_{i}+(\tau_{1}\tau_{2}+1)x_{4}.\]
As $2=(\tau_1\tau_2+1)-\tau_1(\tau_2+1)+(\tau_1+1)$, $\rho$ is surjective. It is easy to check that $H_3$ acts on $\Ker(\rho)$ faithfully. Therefore, by Lemma \ref{semidirectgeneric}, $\ed_{2}(\cat{Alg}_{8,2})\leq 4+4+4+4-2^{3}=8$.

\end{example}

For an $x\in X_r$, let $H_{r,x}$ be the stabilizer of $x$ in $H_{r}=S_{2}\times S_{2}\times P_{2^{r-2}}:=\langle \tau_{1}\rangle \times \langle \tau_{2}\rangle \times P_{2^{r-2}}$. We set $P_{2^{r-2}}=(P_{2^{r-3}})^{2}\rtimes \langle \tau_{r}\rangle$.

\begin{lemma}\label{usss}
For any $r\geq 3$ and any $x\in X_r$, we have
\begin{enumerate}
  \item[(i)] $H_{r,x}\simeq H_{r-1,x}\times P_{2^{r-3}}$.
  \item[(ii)] The group $H_{r}$ is generated by $\tau_{1}, \tau_{2}, \tau_{r}$ and  $H_{r,x}$.
  \item[(iii)] The $\Z[H_{r}]$-module $J_r$ is generated by $2x, \tau_{1}x-x, \tau_{2}x-x$ and $\tau_{r}x-x$.
  \item[(iv)] $\tau_{r}H_{r,x}\tau_{r}\cap H_{r,x}\simeq H_{r-1,x}\times H_{r-1,x}$.
\end{enumerate}
\end{lemma}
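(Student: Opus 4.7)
The plan is to set up a concrete coordinate model for the action of $H_r$ on $X_r$ and to read off each of the four claims from the resulting description of point stabilizers. Identify $X_r\simeq Y_r\times\{\pm1\}\times\{\pm1\}$, where $|Y_r|=2^{r-2}$ records the position within a block and the two sign coordinates distinguish the four blocks $B_1,B_2,B_3,B_4$; under this identification $\tau_1$ and $\tau_2$ flip the first and second sign coordinate respectively, while $P_{2^{r-2}}$ acts only on $Y_r$. Fix $x=(x_0,\epsilon_1,\epsilon_2)$. Since each $\tau_i$ moves $x$ to a different block and $P_{2^{r-2}}$ preserves blocks, an element $\tau_1^a\tau_2^b\pi$ fixes $x$ only when $a=b=0$ and $\pi(x_0)=x_0$, so $H_{r,x}=(P_{2^{r-2}})_{x_0}$.

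Next, decompose $Y_r=Y_r^{(1)}\sqcup Y_r^{(2)}$ compatibly with $P_{2^{r-2}}=(P_{2^{r-3}})^2\rtimes\langle\tau_r\rangle$: the factor $(P_{2^{r-3}})^2$ acts componentwise on the two halves and $\tau_r$ swaps them. Taking $x_0\in Y_r^{(1)}$, no element of the form $(p,q)\tau_r$ can fix $x_0$, while $(p,q)$ fixes $x_0$ iff $p\in(P_{2^{r-3}})_{x_0}$, so that
\[(P_{2^{r-2}})_{x_0}=(P_{2^{r-3}})_{x_0}\times P_{2^{r-3}}.\]
Applying the same coordinate model one level lower identifies $H_{r-1,x}$ with $(P_{2^{r-3}})_{x_0}$, which gives (i). For (iv), the involution $\tau_r$ conjugates $(P_{2^{r-3}})^2$ by swapping its two factors, so $\tau_rH_{r,x}\tau_r=P_{2^{r-3}}\times(P_{2^{r-3}})_{x_0}$, whose intersection with $H_{r,x}$ is $(P_{2^{r-3}})_{x_0}\times(P_{2^{r-3}})_{x_0}\simeq H_{r-1,x}\times H_{r-1,x}$.

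For (ii), since $\tau_1$ and $\tau_2$ already generate the first two factors of $H_r=\langle\tau_1\rangle\times\langle\tau_2\rangle\times P_{2^{r-2}}$, it suffices to show that $\tau_r$ together with $H_{r,x}$ generate $P_{2^{r-2}}$. The subgroup $H_{r,x}=(P_{2^{r-3}})_{x_0}\times P_{2^{r-3}}$ contains $\{1\}\times P_{2^{r-3}}$, and its $\tau_r$-conjugate contains $P_{2^{r-3}}\times\{1\}$; together these generate $(P_{2^{r-3}})^2$, and adjoining $\tau_r$ produces all of $P_{2^{r-2}}$.

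For (iii), the transitivity of $H_r$ on $X_r$ implies that the augmentation ideal $I_{X_r}=\Ker(\Z[X_r]\to\Z)$ is generated as a $\Z[H_r]$-module by $\{gx-x\}$ as $g$ ranges over any set of generators of $H_r$ modulo $H_{r,x}$; by (ii), the three elements $\tau_1x-x,\tau_2x-x,\tau_rx-x$ already suffice. The module $J_r$ sits in a short exact sequence $0\to I_{X_r}\to J_r\to 2\Z\to 0$, whose quotient is generated by the class of $2x$ for any $x\in X_r$, so appending $2x$ yields a generating set for $J_r$. The only real bookkeeping issue is to arrange the coordinate model consistently so that the same point $x$ can be reused across (i), (ii) and (iv); once that is done, all four claims become elementary consequences of the wreath-product structure of $P_{2^{r-2}}$.
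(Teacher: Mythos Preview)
Your argument is correct and follows the same overall strategy as the paper: both proofs identify $H_{r,x}$ with the stabilizer of a point under $P_{2^{r-2}}$, read off (i) and (iv) from the wreath-product decomposition $P_{2^{r-2}}=(P_{2^{r-3}})^2\rtimes\langle\tau_r\rangle$, and deduce (iii) from (ii) together with the transitivity of the action and the exact sequence defining $J_r$.

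The one genuine difference is in (ii). The paper proves it by induction on $r$, invoking the inductive hypothesis $P_{2^{r-3}}=\langle\tau_{r-1},H_{r-1,x}\rangle$ and then noting that $\tau_{r-1}$ lies in the group generated by $\tau_r$ and $P_{2^{r-3}}$. You instead give a direct, induction-free argument: the stabilizer $H_{r,x}=(P_{2^{r-3}})_{x_0}\times P_{2^{r-3}}$ already contains the full second factor $\{1\}\times P_{2^{r-3}}$, so its $\tau_r$-conjugate contains $P_{2^{r-3}}\times\{1\}$, and these together with $\tau_r$ visibly generate $P_{2^{r-2}}$. Your route is slightly cleaner here, since it avoids the base case and the somewhat implicit passage between $\tau_{r-1}$ at level $r-1$ and its incarnation inside $P_{2^{r-2}}$; the paper's induction, on the other hand, makes the recursive structure of the $\tau_j$'s more explicit, which is convenient when one later needs to track how the various $\tau$'s interact inside $H_r$.
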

\begin{proof}
 (i) By the action of $H_r$ (see page $8$), the stabilizer of $x$ in $H_{r}$ is the stabilizer of $x$ under the action of $1\times 1\times P_{2^{r-2}}\simeq P_{2^{r-2}}$ on the block $B_{i}$ containing $x$ for some $i$. As $P_{2^{r-2}}=(P_{2^{r-3}})^{2}\rtimes \langle \tau_{r}\rangle$, the stabilizer of $x$ in $P_{2^{r-2}}$ is $H_{r-1,x}\times P_{2^{r-3}}$.

\bigskip

(ii) Induction on $r$. The case $r=3$ comes from Example \ref{firstexamp}. By induction hypothesis we have $P_{2^{r-3}}=\langle \tau_{r-1}, H_{r-1,x}\rangle$. As $\tau_{r-1}$ is generated by $\tau_{r}$ and $P_{2^{r-3}}$, the result follows immediately.

\bigskip

(iii) As $H_{r}$ acts on $X_r$ transitively, the result follows from Lemma \ref{usss}(ii) and the sequence $(\ref{JXsequence})$.

\bigskip

(iv) As $\tau_{r}H_{r,x}\tau_{r}=H_{r,\tau_{r}(x)}$, the result follows from Lemma \ref{usss}(i).
\end{proof}

\bigskip
\noindent
{\it Proof of the second part in Theorem {\rm\ref{mainthm}(ii)}.}\quad For $r\geq 3$ and $x\in X_r$, we set \begin{align*}
\Lambda_{r} &:= \Z[H_{r}/\langle\tau_{1}\rangle\times H_{r,x}]\oplus \Z[H_{r}/\langle\tau_{2}\rangle\times H_{r,x}]\oplus\Z[H_{r}/\langle\tau_{1}\tau_{2}\rangle\times H_{r,x}] \\
& \oplus\Z[H_{r}/(\tau_{r}H_{r,x}\tau_{r}\cap H_{r,x})\rtimes \langle\tau_{r}\rangle].
\end{align*}
Define the map $\rho:\Lambda_{r}\to J_r$ by taking a generator of the first component (respectively, the second component) of $\Lambda_{r}$ to $\tau_{1}x+x$ (respectively, $\tau_{2}x+x$), a generator of the third component of $\Lambda_{r}$ to $\tau_{1}\tau_{2}x+x$, and a generator of the last component of $\Lambda_{r}$ to $\tau_{r}x+x$. By construction, this map is well defined. As $2x=(\tau_1\tau_2x+x)-\tau_1(\tau_2x+x)+(\tau_1x+x)$, $\rho$ is surjective by Lemma \ref{usss}(iii).

As $H_{r}$ acts faithfully on $\Ker(\rho)$ by Lemma \ref{faithful}, there exists a generically free representation for $T_r\rtimes H_r$ by Lemma \ref{semidirectgeneric}. Therefore, by Lemma \ref{semidirectgeneric} and (\ref{limitpv}), we have
\begin{align*}
\ed_{2}(T_r\rtimes H_r)&\leq \rank(\Z[H_{r}/\langle\tau_{1}\rangle\times H_{r,x}])+\rank(\Z[H_{r}/\langle\tau_{2}\rangle\times H_{r,x}])\\
& +\rank(\Z[H_{r}/\langle\tau_{1}\tau_{2}\rangle\times H_{r,x}])+\rank(\Z[H_{r}/(\tau_{r}H_{r,x}\tau_{r}\cap H_{r,x})\rtimes \langle\tau_{r}\rangle])\\
&-\rank(J_r)\\
& =2^{r-1}+2^{r-1}+2^{r-1}+2^{r+(r-1)-2-1}-2^{r} \text{ (by Lemma \ref{usss}(i),(iv)) }\\
& =2^{r-1}+2^{2r-4}.
\end{align*}

By (\ref{subgrouped}), $\ed_{2}(T_r\rtimes G_r)=\ed_{2}(T_r\rtimes H_r)$. As the morphism $\Theta$ in (\ref{thetaa}) is surjective by Lemma \ref{surjectlemm}, it follows from \cite[Lemma 1.10]{BerhuyFavi03} and (\ref{limitpv}) that \[\ed_{2}(\cat{Alg}_{2^{r},2})\leq \max\{\ed_{2}(T_r\rtimes G_r), \ed_{2}(\cat{Alg}_{2,2}), \cdots, \ed_{2}(\cat{Alg}_{2^{r-1},2})\}.\] By induction on $r$, we finally have \[\ed_{2}(\cat{Alg}_{2^{r},2})\leq \ed_{2}(T_r\rtimes G_r)\leq 2^{r-1}+2^{2r-4}.\]\qed

\bigskip

\begin{lemma}\label{faithful}
Let $\rho:\Lambda_{r}\to J_r$ be the morphism in the proof of the second part in Theorem {\rm\ref{mainthm}(ii)}. Then, the action of $H_{r}$ on $\Ker(\rho)$ is faithful.
\end{lemma}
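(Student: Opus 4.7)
The plan is to show that any nontrivial $\sigma \in H_r$ moves some explicit element of $\Ker(\rho)$. Denoting by $e_1, e_2, e_3, e_4$ the identity-coset generators of the four summands of $\Lambda_r$, I would first verify by direct computation (using $\tau_1^2=1$ and that $\tau_1, \tau_2, \tau_r$ pairwise commute in $H_r$) that
\[
\omega_1 := (1+\tau_1)(e_3 - e_2) \quad \text{and} \quad \omega_4 := (1+\tau_r)(e_3 - \tau_1 e_2 + e_1) - 2 e_4
\]
both lie in $\Ker(\rho)$, and hence so do all $H_r$-translates $h\omega_1$ and $h\omega_4$ for $h \in H_r$ by $H_r$-equivariance.

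The case analysis follows the decomposition $H_r = \langle\tau_1\rangle \times \langle\tau_2\rangle \times P_{2^{r-2}}$. If $\sigma \notin P_{2^{r-2}}$, I would use $\omega_4$: the stabilizer of $e_4$, namely $(\tau_r H_{r,x}\tau_r \cap H_{r,x}) \rtimes \langle\tau_r\rangle$, is contained in $P_{2^{r-2}}$, so $\sigma e_4 \neq e_4$, and therefore the fourth-summand contribution of $\sigma\omega_4 - \omega_4$ is $-2(\sigma e_4 - e_4) \neq 0$. If $\sigma = p \in P_{2^{r-2}} \setminus H_{r,x}$, I would use $\omega_1$: since $p$ has no $\tau_1\tau_2$-component, $p$ is not in the stabilizer $\langle\tau_1\tau_2\rangle \times H_{r,x}$ of $e_3$, so $pe_3 \neq e_3$, and I would argue that $e_3, \tau_1 e_3, pe_3, \tau_1 pe_3$ are four distinct cosets, forcing the third-summand contribution of $\sigma\omega_1 - \omega_1 = (1+\tau_1)(p-1)(e_3 - e_2)$ to be nonzero. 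Finally, if $\sigma \in H_{r,x} \setminus \{1\}$, transitivity of $H_r$ on $X_r$ supplies $y \in X_r$ with $\sigma y \neq y$; choosing $h \in H_r$ with $hx = y$, the conjugate $h^{-1}\sigma h$ lies in $P_{2^{r-2}} \setminus H_{r,x}$ (since $P_{2^{r-2}}$ is a direct factor of $H_r$, hence normal) and falls into the previous case, so $\sigma$ moves the translate $h\omega_1 \in \Ker(\rho)$.

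The main obstacle is the coset bookkeeping in the middle case --- verifying that $e_3, \tau_1 e_3, pe_3, \tau_1 pe_3$ are pairwise distinct cosets modulo $\langle\tau_1\tau_2\rangle \times H_{r,x}$ --- which requires an explicit analysis of the way $p \in P_{2^{r-2}} \setminus H_{r,x}$ interacts with this stabilizer, appealing to the structure of $H_{r,x}$ given in Lemma \ref{usss}(i).
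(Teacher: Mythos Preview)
Your approach is correct and takes a genuinely different route from the paper's. The paper tensors with $\Q$, uses an injection $i:\Z[X_r]\hookrightarrow \Z[X_r/\langle\tau_1\rangle]\oplus\Z[X_r/\langle\tau_2\rangle]\oplus\Z[X_r/\langle\tau_1\tau_2\rangle]$ to identify $\Ker(\rho)_\Q \cong \Coker(i)_\Q \oplus Y_r$ with $Y_r$ the rationalized fourth summand $\Q[H_r/((\tau_r H_{r,x}\tau_r\cap H_{r,x})\rtimes\langle\tau_r\rangle)]$, and then shows that $H_r$ already acts faithfully on $Y_r$ alone by a case analysis on the subgroup $(\tau_r H_{r,x}\tau_r\cap H_{r,x})\rtimes\langle\tau_r\rangle$. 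You instead work integrally with explicit kernel elements: the fourth summand (via $\omega_4$) handles $\sigma\notin P_{2^{r-2}}$, and for $\sigma\in P_{2^{r-2}}$ you switch to the second and third summands (via $\omega_1$ and its $H_r$-translates, together with the conjugation trick reducing $H_{r,x}\setminus\{1\}$ to $P_{2^{r-2}}\setminus H_{r,x}$). Your route is more elementary in that it avoids the rational splitting and the structural identification of $\Ker(\rho)_\Q$; the paper's route is more uniform (everything happens in the single permutation module $Y_r$), but its case analysis, particularly Cases~2(ii) and~2(iii), is no shorter than yours. Incidentally, your ``main obstacle'' is lighter than you suggest: the pairwise distinctness of $e_3,\ \tau_1 e_3,\ p e_3,\ \tau_1 p e_3$ follows directly from the direct-product decomposition $H_r=\langle\tau_1\rangle\times\langle\tau_2\rangle\times P_{2^{r-2}}$ and the explicit description of the stabilizer $\langle\tau_1\tau_2\rangle\times H_{r,x}=\{(1,1,h),(\tau_1,\tau_2,h):h\in H_{r,x}\}$, without any recourse to Lemma~\ref{usss}(i).
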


\begin{proof}
It follows from the exact sequence (\ref{JXsequence}) that $J_r\tens \Q=\Q[X_r]$. Hence, by the exact sequence \[\Ker(\rho)\to \Lambda_{r}\xra{\rho} J_r,\] we have
\begin{align}\label{eeeE}
\Q[X_r]\oplus (\Ker(\rho))_{\Q} &=\Q[H_{r}/\langle\tau_{1}\rangle\times H_{r,x}]\oplus \Q[H_{r}/\langle\tau_{2}\rangle\times H_{r,x}]\oplus \Q[H_{r}/\langle\tau_{1}\tau_{2}\rangle\times H_{r,x}] \nonumber \\
& \oplus \Q[H_{r}/(\tau_{r}H_{r,x}\tau_{r}\cap H_{r,x})\rtimes \langle\tau_{r}\rangle].
\end{align}
By the actions of $\tau_{1}$ and $\tau_{2}$, the natural map \[i:\Z[X_r]\to \Z[X_r/\langle \tau_{1}\rangle]\oplus \Z[X_r/\langle \tau_{2}\rangle] \oplus \Z[X_r/\langle \tau_{1}\tau_{2}\rangle]\] is injective, hence we get the exact sequence \[0\to \Z[X_r]\xra{i} \Z[X_r/\langle \tau_{1}\rangle]\oplus \Z[X_r/\langle \tau_{2}\rangle] \oplus \Z[X_r/\langle \tau_{1}\tau_{2}\rangle]\to \Coker(i)\to 0\] and
\begin{equation}\label{faithfulexact}
\Q[X_r]\oplus (\Coker(i))_{\Q}=\Q[X_r/\langle \tau_{1}\rangle]\oplus \Q[X_r/\langle \tau_{2}\rangle] \oplus \Q[X_r/\langle \tau_{1}\tau_{2}\rangle].
\end{equation}
By (\ref{eeeE}) and (\ref{faithfulexact}), we have \[(\Ker(\rho))_{\Q}=(\Coker(i))_{\Q}\oplus \Q[H_{r}/(\tau_{r}H_{r,x}\tau_{r}\cap H_{r,x})\rtimes \langle\tau_{r}\rangle],\] thus it is enough to show that $H_{r}=\langle\tau_{1}\rangle \times \langle \tau_{2}\rangle \times [(P_{2^{r-3}})^{2}\rtimes \langle \tau_{r}\rangle]$ acts faithfully on $Y_r:=\Q[H_{r}/(\tau_{r}H_{r,x}\tau_{r}\cap H_{r,x})\rtimes \langle\tau_{r}\rangle]$. We prove this using case by case analysis.

\bigskip

{\it Case} $1$:\quad Suppose that $h\notin 1\times 1\times [(P_{2^{r-3}})^{2}\rtimes \langle \tau_{r}\rangle]\subset H_{r}$.
\begin{enumerate}
\item[(i)] If $h=\tau_{i}h'\in H_{r}$ for some $h'\in (P_{2^{r-3}})^{2}\rtimes \langle \tau_{r}\rangle$ and $i=1,2$, then $h\overline{\tau_{i}}=\overline{h'}\neq \overline{\tau_{i}}$ in $Y_r$.
\item[(ii)] If $h=\tau_{1}\tau_{2}h''\in H_{r}$ for some $h''\in (P_{2^{r-3}})^{2}\rtimes \langle \tau_{r}\rangle$, then $h\overline{\tau_{1}\tau_{2}}=\overline{h''}\neq \overline{\tau_{1}\tau_{2}}$ in $Y_r$.
\end{enumerate}

\bigskip

{\it Case} $2$:\quad Suppose now that $h\in 1\times 1\times [(P_{2^{r-3}})^{2}\rtimes \langle \tau_{r}\rangle]\subset H_{r}$. Recall from Lemma \ref{usss}(iv) that $\tau_{r}H_{r,x}\tau_{r}\cap H_{r,x}\simeq H_{r-1,x}\times H_{r-1,x}$. We view $(\tau_{r}H_{r,x}\tau_{r}\cap H_{r,x})\rtimes \langle\tau_{r}\rangle$ as a subgroup of $(P_{2^{r-3}})^{2}\rtimes \langle \tau_{r}\rangle$.
\begin{enumerate}
\item[(i)] Suppose that $h\in (P_{2^{r-3}})^{2}\rtimes \langle \tau_{r}\rangle\backslash (\tau_{r}H_{r,x}\tau_{r}\cap H_{r,x})\rtimes \langle\tau_{r}\rangle$. Then $h\overline{1}=\overline{h}\neq \overline{1}$ in $Y_r$.
\item[(ii)] Suppose that $h=h_{1}h_{2}\tau_{r}\in (H_{r-1,x}\times H_{r-1,x})\rtimes \langle\tau_{r}\rangle\simeq (\tau_{r}H_{r,x}\tau_{r}\cap H_{r,x})\rtimes \langle\tau_{r}\rangle$, where $h_{1}$ and $h_{2}$ are elements in the first and the second $H_{r-1,x}$, respectively. We may assume that $h_{1}\neq 1$. Choose a transposition $\delta\in P_{2^{r-3}}$ which does not fix $x$. Then $h\overline{\delta}=\overline{h_{1}h_{2}\tau_{r}\delta}=\overline{h_{1}h_{2}\delta'\tau_{r}}=\overline{\delta'h_{1}h_{2}\tau_{r}}=\overline{\delta'}\neq \overline{\delta}$, where $\delta'$ is the transposition such that $\tau_{r}\delta=\delta'\tau_{r}$.
\item[(iii)] Suppose that Let $h=h_{1}h_{2}\in H_{r-1,x}\times H_{r-1,x}\simeq \tau_{r}H_{r,x}\tau_{r}\cap H_{r,x}$, where $h_{1}$ and $h_{2}$ are elements in the first and the second $H_{r-1,x}$, respectively. We may assume that $h_{1}\neq 1$. Let $\tau_{r-1}$ be the permutation which acts on the same block with $h_{1}$. Then $h\overline{\tau_{r-1}}=\overline{h_{1}\tau_{r-1}}\neq \overline{\tau_{r-1}}$ in $Y_r$.
\end{enumerate}

\medskip
This completes the proof of faithfulness.
\end{proof}

\section{Essential dimension of split simple groups of type $A_{n-1}$}\label{typea}
Computing the essential dimension of split simple groups of type $A_{n-1}$, especially of their adjoint cases, is a long-standing open problem. It is wide open in general and the only completely known cases are the following:

\begin{example}\label{sl4282}
Let $F$ be an arbitrary base field.
\begin{enumerate}
\item[(i)] ($A_{1}$ and $A_{2}$) By \cite[Lemma 8.5.7]{Reichstein00a} and \cite[Lemma 9.4(c)]{Re00},
\[\ed_{2}(\gSL_{2}/\gmu_{2})=\ed(\gSL_{2}/\gmu_{2})=\ed_{3}(\gSL_{3}/\gmu_{3})=\ed(\gSL_{3}/\gmu_{3})=2.\]
\item[(ii)] ($A_{3}$) In \cite[Corollary 1.2]{Merkurjev09a}, the adjoint case was obtained as
\[\ed_{2}(\gSL_{4}/\gmu_{4})=\ed(\gSL_{4}/\gmu_{4})=5\]
over a field $F$ of $\ch(F)\neq 2$. As $A_{3}=D_{3}$, we have $\gSL_{4}/\gmu_{2}\simeq \gSO_{6}$. Therefore, by \cite[Theorem 10.4 and Example 12.7]{Re00},
\[\ed_{2}(\gSL_{4}/\gmu_{2})=\ed(\gSL_{4}/\gmu_{2})=5.\]
over a field $F$ of $\ch(F)\neq 2$.
Alternatively, one may use \cite[Theorem 8.13]{Reichstein00a} and Lemma \ref{lemmatrivial} for lower and upper bounds, respectively.
\end{enumerate}
\end{example}

In this section we relate the essential dimensions of $\cat{Alg}_{n,m}$ and of the split simple groups of type $A_{n-1}$ ($\simeq\gSL_{n}/\gmu_{m}$). As $\ed(\gSL_{n}/\gmu_{n})=\ed(\cat{Alg}_{n,n})$, the main purpose of the following lemma is the case where $m\neq n$.

\begin{lemma}\label{lemmatrivial}
Let $F$ be a field, $n\geq 1$ a integer not divisible by $\ch(F)$ and $m$ a divisor of $n$. Then \[\ed(\cat{Alg}_{n,m})\leq \ed(\gSL_{n}/\gmu_{m})\leq \ed(\cat{Alg}_{n,m})+1.\]
\end{lemma}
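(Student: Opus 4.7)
The plan is to exploit the short exact sequence of algebraic groups
\[1 \to \gSL_n/\gmu_m \to \gGL_n/\gmu_m \xrightarrow{\det} \gm \to 1,\]
which is well-defined because $m \mid n$ forces $\gmu_m \subset \gSL_n$. For any field extension $E/F$, Hilbert~90 kills $H^{1}(E,\gm)$, so the associated non-abelian long exact sequence of pointed sets reads
\[E^{\times} \xrightarrow{\partial_{E}} H^{1}(E,\gSL_n/\gmu_m) \xrightarrow{\pi_{E}} H^{1}(E,\gGL_n/\gmu_m) \to 1,\]
and $H^{1}(E,\gGL_n/\gmu_m)=\cat{Alg}_{n,m}(E)$ as recalled in the introduction. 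The lower bound is immediate: $\pi$ is a surjective morphism of functors from $\cat{Fields}/F$ to $\cat{Sets}$, so $(\ref{ctcssss})$ gives $\ed(\cat{Alg}_{n,m})=\ed(\gGL_n/\gmu_m)\leq \ed(\gSL_n/\gmu_m)$.

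For the upper bound, I would fix $\gamma \in H^{1}(E,\gSL_n/\gmu_m)$ and set $[A]=\pi_{E}(\gamma)\in \cat{Alg}_{n,m}(E)$. Choose a subfield $K\subset E$ with $\td_{F}(K)\leq \ed([A])$ and $[A_{0}]\in \cat{Alg}_{n,m}(K)$ such that $(A_{0})_{E}\simeq A$. The surjectivity of $\pi_{K}$ produces a lift $\gamma_{0}\in H^{1}(K,\gSL_n/\gmu_m)$ of $[A_{0}]$, and then $\gamma$ and $(\gamma_{0})_{E}$ lie in the same fiber of $\pi_{E}$. Because $\gm$ is abelian, the non-abelian exact sequence above (combined with a routine twisting argument) identifies fibers of $\pi_{E}$ with orbits of the $E^{\times}$-action induced by the connecting map $\partial_{E}$; in particular there exists $a\in E^{\times}$ with $\gamma = a \cdot (\gamma_{0})_{E}$ in $H^{1}(E,\gSL_n/\gmu_m)$.

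Since restriction from $K(a)$ to $E$ is compatible with the connecting map, the class $a\cdot(\gamma_{0})_{K(a)}\in H^{1}(K(a),\gSL_n/\gmu_m)$ restricts to $\gamma$, so $\gamma$ is defined over $K(a)$. Therefore $\ed(\gamma)\leq \td_{F}(K(a))\leq \td_{F}(K)+1\leq \ed([A])+1$, and taking suprema over all $E$ and $\gamma$ yields $\ed(\gSL_n/\gmu_m)\leq \ed(\cat{Alg}_{n,m})+1$. The only subtle point is the fiber description in non-abelian cohomology: once one verifies, by twisting $\gamma_{0}$, that the fibers of $\pi_{E}$ are genuine $E^{\times}$-orbits rather than merely related to them, the rest of the argument is a clean lifting exercise built on Hilbert~90.
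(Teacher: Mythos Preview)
Your argument is correct but follows a different exact sequence from the paper's. The paper works with
\[1 \to \gmu_{n/m} \xra{\alpha} \gSL_n/\gmu_m \to \gPGL_n \to 1,\]
identifies the image of $H^1(-,\gSL_n/\gmu_m)$ in $H^1(-,\gPGL_n)$ with $\cat{Alg}_{n,m}$ via the connecting map into $\Br_{n/m}$, checks by a short diagram chase (using $H^1(-,\gSL_n)=1$ and surjectivity of $H^1(-,\gmu_n)\to H^1(-,\gmu_{n/m})$) that $\alpha_*$ has trivial image, and then invokes the fibration formalism of \cite[Proposition~1.13]{BerhuyFavi03} together with $\ed(\gmu_{n/m})\le 1$. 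You instead use $1 \to \gSL_n/\gmu_m \to \gGL_n/\gmu_m \to \gm \to 1$ and argue directly, via the $E^\times$-orbit description of the fibers of $\pi_E$, that any $\gSL_n/\gmu_m$-torsor descends after adjoining a single element to a field of definition of the underlying algebra. This is precisely an explicit instance of the codimension bound $\ed(H)\le\ed(G)+\dim(G/H)$ for a closed subgroup, and indeed the paper points out in Remark~\ref{rmklemmatrivial}(i) that the upper inequality also follows from \cite[Theorem~6.19]{BerhuyFavi03} applied to this very inclusion. Your route is slightly more elementary and sidesteps the verification that $\alpha_*$ is trivial; the paper's route has the minor advantage of exhibiting the fibers over $\cat{Alg}_{n,m}$ as twisted images of $H^1(-,\gmu_{n/m})$, which can be useful when one wants tighter control. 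The ``subtle point'' you flag is genuine but standard: since the cocycle one twists by takes values in the normal subgroup $\gSL_n/\gmu_m$, the quotient $\gm$ is unchanged under twisting, and the resulting $E^\times$-action on $H^1(E,\gSL_n/\gmu_m)$ is functorial in $E$, which is exactly what your descent to $K(a)$ needs.
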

\begin{proof}
Consider the exact sequence
\begin{equation}\label{first}
1\to \gmu_{n/m}\xra{\alpha} \gSL_{n}/\gmu_{m}\to \gPGL_{n}\to 1.
\end{equation}
For a field extension $K/F$, the sequence (\ref{first}) induces an exact sequence
\[H^{1}(K,\gmu_{n/m})\to H^{1}(K,\gSL_{n}/\gmu_{m})\to H^{1}(K,\gPGL_{n})\xra{\partial} H^{2}(K,\gmu_{n/m}),\]
where $\partial: H^{1}(K,\gPGL_{n})\to H^{2}(K,\gmu_{n/m})=\Br_{n/m}(K)$ takes the isomorphism class of a central simple $K$-algebra $A$ of degree $n$ to the class $m[A]$. Therefore, we have the following sequence
\begin{equation}\label{second}
H^{1}(K,\gmu_{n/m})\xra{\alpha_{*}} H^{1}(K,\gSL_{n}/\gmu_{m})\twoheadrightarrow \cat{Alg}_{n,m}(K).
\end{equation}
As $H^{1}(K,\gSL_{n})$ is trivial and the first vertical map of the following diagram
\begin{equation*}
\xymatrix{
H^1(K,\gmu_{n}) \ar@{->}[d]\ar@{->}[r] & H^{1}(K,\gSL_{n}) \ar@{->}[d]  \\
H^1(K,\gmu_{n/m})\ar@{->}^{\alpha_{\ast}}[r]  & H^{1}(K,\gSL_{n}/\gmu_{m}).\\
}
\end{equation*}
is surjective, the image of $\alpha_{*}$ is trivial. Therefore, from (\ref{second}), we get a fibration of functors (see \cite[Definition 1.12]{BerhuyFavi03})
\[H^{1}(-,\gmu_{n/m})\rightsquigarrow H^{1}(-,\gSL_{n}/\gmu_{m})\twoheadrightarrow \cat{Alg}_{n,m}.\]
As $\ed(\gmu_{n/m})=1$, we get $\ed_{F}(\gSL_{n}/\gmu_{m})\leq \ed_{F}(\cat{Alg}_{n,m})+1$ by \cite[Proposition 1.13]{BerhuyFavi03}. The lower bound follows from (\ref{ctcssss}).
\end{proof}

\begin{remark}\label{rmklemmatrivial}
Let $F$ be an arbitrary base field.
\begin{enumerate}
\item[(i)] As $\gSL_{n}/\gmu_{m}$ is a subgroup of $\gGL_{n}/\gmu_{m}$ of codimension $1$, the second inequality of Lemma \ref{lemmatrivial} can be obtained by \cite[Theorem 6.19]{BerhuyFavi03}.
\item[(ii)] Recently, V.~Chernousov and A.~Merkurjev proved that
\begin{equation}\label{CMRecent}
\ed_{p}(\gSL_{p^r}/\gmu_{p^s})=\ed_{p}(\cat{Alg}_{p^r,p^s})+1
\end{equation}
for any $0\neq s< r$ over a field of $\ch(F)\neq p$ in \cite{CM}. Therefore, the computation of essential $p$-dimension of split simple group of type $A_{p^r-1}$ is reduced to the computation of $\ed_{p}(\cat{Alg}_{p^r,p^s})$. In particular, \cite[Corollary 8.3]{BM10}, we have $\ed_{2}(\cat{Alg}_{8,2})=\ed(\cat{Alg}_{8,2})=8$ over a field $F$ of $\ch(F)\neq 2$. Hence, by (\ref{CMRecent}) and Lemma \ref{lemmatrivial}, one can find \[\ed_{2}(\gSL_{8}/\gmu_{2})=\ed(\gSL_{8}/\gmu_{2})=9\]
over a field $F$ of $\ch(F)\neq 2$. Moreover, by (\ref{CMRecent}) and Corollary \ref{edslhighest16}, one can conclude that
\[\ed_{2}(\gSL_{16}/\gmu_{2})=25;\]
see \cite[Corollary 1.2]{CM}.
\end{enumerate}
\end{remark}

\end{document}